\newtheorem{Theorem}{Theorem}
\newtheorem{Definition}[Theorem]{Definition}
\newtheorem{Remark}[Theorem]{Remark}
\newtheorem{Lemma}[Theorem]{Lemma}
\newtheorem{Corollary}[Theorem]{Corollary}
\newcommand{\spa}{\mbox{span}}
\newcommand{\F}{F\langle X\rangle}
\newcommand{\Ft}{F\langle X, \mbox{Tr}\rangle}
\newcommand{\V}{\mbox{var}}
\DeclareMathOperator{\Id}{Id}
\newcommand{\I}{\mbox{Id}}
\newcommand{\tr}{\mbox{tr}}
\newcommand{\Tr}{\mbox{Tr}}
\begin{document}
\title[Trace identities and almost polynomial growth]{Trace identities and almost polynomial growth}

\author{Antonio Ioppolo}
\address{IMECC, UNICAMP, S\'ergio Buarque de Holanda 651, 13083-859 Campinas, SP, Brazil}
\email{ioppolo@unicamp.br}

\thanks{A. Ioppolo was supported by the Fapesp post-doctoral grant number 2018/17464-3}

\author{Plamen Koshlukov}
\address{IMECC, UNICAMP, S\'ergio Buarque de Holanda 651, 13083-859 Campinas, SP, Brazil}
\email{plamen@unicamp.br}

\thanks{P. Koshlukov was partially supported by CNPq grant No.~302238/2019-0, and by FAPESP grant No.~2018/23690-6}

\author{Daniela La Mattina}
\address{Dipartimento di Matematica e Informatica, Universit\`a degli Studi di Palermo, Via Archirafi 34, 90123, Palermo, Italy}
\email{daniela.lamattina@unipa.it}

\thanks{D. La Mattina was partially supported by GNSAGA-INDAM}

\subjclass[2010]{Primary 16R10, Secondary 16R30, 16R50}

\keywords{Trace algebras, polynomial identities, codimensions growth}

\begin{abstract}
In this paper we study algebras with trace and their trace polynomial identities over a field of characteristic 0. We consider two commutative matrix algebras: $D_2$, the algebra of $2\times 2$ diagonal matrices and $C_2$, the algebra of $2 \times 2$ matrices generated by $e_{11}+e_{22}$ and $e_{12}$. We describe all possible traces on these algebras and we study the corresponding trace codimensions. Moreover we characterize the varieties with trace of polynomial growth generated by a finite dimensional algebra. As a consequence, we see that the growth of a variety with trace is either polynomial or exponential. 
\end{abstract}

\maketitle

\section{Introduction}
All algebras we consider in this paper will be associative and over a fixed field $F$ of characteristic 0. Let $F\langle X\rangle$ be the free associative algebra freely generated by the infinite countable set $X=\{x_1, x_2, \ldots\}$ over $F$. One interprets $F\langle X\rangle$ as the $F$-vector space with a basis consisting of 1 and all non-commutative monomials (that is words) on the alphabet $X$. The multiplication in $F\langle X\rangle$ is defined on the monomials by juxtaposition. Let $A$ be an algebra, it is clear that every function $\varphi\colon X\to A$ can be extended in a unique way to a homomorphism (denoted by the same letter) $\varphi\colon F\langle X\rangle\to A$. A polynomial $f\in F\langle X\rangle$ is a polynomial identity (PI for short) for the algebra $A$ whenever $f$ lies in the kernels of all homomorphisms from $F\langle X\rangle$ to $A$. Equivalently $f(x_1,\ldots,x_n)$ is a polynomial identity for $A$ whenever $f(a_1,\ldots,a_n)=0$ for any choice of $a_i \in A$. The set of all PI's for a given algebra $A$ forms an ideal in $F\langle X\rangle$ denoted by $\I(A)$ and called the T-ideal of $A$. Clearly $\I(A)$ is closed under endomorphisms. It is not difficult to prove that the converse is also true: if an ideal $I$ in $F\langle X\rangle$ is closed under endomorphisms then $I=\I(A)$ for some (adequate) algebra $A$. One such algebra is the relatively free algebra $F\langle X\rangle/I$.

Knowing the polynomial identities satisfied by an algebra $A$ is an important problem in Ring theory. It is also a very difficult one; it was solved completely in very few cases. These include the algebras $F$ (trivial); $M_2(F)$, the full matrix algebra of order 2; $E$, the infinite dimensional Grassmann algebra; $E\otimes E$. If one adds to the above algebras the upper triangular matrices $UT_n(F)$, one will get more or less the complete list of algebras whose identities are known. 

The theory developed by A. Kemer in the 80-ies (see \cite{Kemer1991book}) solved in the affirmative the long-standing Specht problem: is every T-ideal in the free associative algebra finitely generated as a T-ideal? But the proof given by Kemer is not constructive; it suffices to mention that even the description of the generators of the T-ideal of $M_3(F)$ are not known, and it seems to be out of reach with the methods in use nowadays. 

Thus finding the exact form of the polynomial identities satisfied by a given algebra is practically impossible in the vast majority of important algebras. Hence one is led to study either other types of polynomial identities or other characteristics of the T-ideals. In the former direction it is worth mentioning the study of polynomial identities in algebras graded by a group or a semigroup, in algebras with involution, in algebras with trace and so on. Clearly one has to incorporate the additional structure into the ``new'' polynomial identities. It turned out such identities are sort of ``easier'' to study than the ordinary ones. 

We cite as an example the graded identities for the matrix algebras $M_n(F)$ for the natural gradings by the cyclic groups $\mathbb{Z}_n$ and $\mathbb{Z}$: these were described by Vasilovsky in \cite{Vasilovsky1998, Vasilovsky1999}. Gradings on important algebras (for the PI-theory) and the corresponding graded identities have been studied by very many authors (we refer the reader to the monograph \cite{ElduqueKochetov2013} and the references therein). The trace identities for the full matrix algebras were described independently by Razmyslov \cite{Razmyslov1974} and by Procesi \cite{Procesi1976} (see also the paper by Razmyslov \cite{Razmyslov1985} for a generalization to another important class of algebras). It turns out that the ideal of all trace identities for the matrix algebra $M_n(F)$ is generated by a single polynomial, this is the well known Hamilton--Cayley polynomial written in terms of the traces of the matrix and its powers (and then linearised). We must note here that as it  often happens, the simplicity of the statement of the theorem due to Razmyslov and Procesi is largely misleading, and that the proofs are quite sophisticated and extensive. 

The free associative algebra is graded by the degrees of its monomials, and also by their multidegrees. Clearly the T-ideals are homogeneous in such gradings; this implies that the relatively free algebras inherit the gradings on $F\langle X\rangle$. One might want to describe the Hilbert (or Poincar\'e) series of the relatively free algebras. This task was achieved also in very few instances. 

Studying the relatively free algebras is related to studying varieties of algebras. We recall the corresponding notions and their importance. Let $A$ be an algebra with T-ideal $\I(A)$. The class of all algebras satisfying all polynomial identities from $\I(A)$ (and possibly some more PI's) is called the variety of algebras $\V(A)$ generated by $A$. Then the relatively free algebra $F\langle X\rangle/\I(A)$ is the relatively free algebra in $\V(A)$ (clearly there might be several algebras that satisfy the same polynomial identities as $A$, and passing to $\V(A)$ one may ``forget'' about $A$ and even look for some ``better'' algebra generating the same variety).

One of the most important numerical invariants of a variety (or a T-ideal) is its codimension sequence. Let $P_n$ denote the vector space in $F\langle X\rangle$ consisting of all multilinear polynomials in $x_1$, \dots, $x_n$. One may view $P_n$ as the vector subspace of $F\langle X\rangle$ with a basis consisting of all monomials $x_{\sigma(1)} x_{\sigma(2)}\cdots x_{\sigma(n)}$ where $\sigma\in S_n$, the symmetric group on $\{1,2,\ldots,n\}$. It is well known that whenever the base field is of characteristic 0, each T-ideal $I$ is generated by its multilinear elements, that is by all intersections $I\cap P_n$, $n\ge 1$. 

On the other hand $P_n$ is a left module over $S_n$ and it is clear that $P_n\cong FS_n$, the regular $S_n$-module. The T-ideals are invariant under permutations of the variables hence $I\cap P_n$ is a submodule of $P_n$. In this way one can employ the well developed theory of the representations of the symmetric (and general linear) groups and study the polynomial identities satisfied by an algebra. This approach might have seemed rather promising but in 1972 A. Regev \cite{Regev1972} established a fundamental result showing that if $I\ne 0$ then the intersections $I\cap P_n$ tend to become very large when $n\to\infty$. More precisely let $A$ be a PI-algebra (i.e., $A$ satisfies a non-trivial identity), and let $I=\I(A)$, denote by $P_n(A) = P_n/(P_n\cap I)$. Then $P_n(A)$ is also an $S_n$-module, its dimension $c_n(A)=\dim P_n(A)$ is called the $n$-th codimension of $A$ (or of the variety $\V(A)$, or of the relatively free algebra in $\V(A)$). Regev's theorem states that if $A$ satisfies an identity of degree $d$ then $c_n(A)\le (d-1)^{2n}$. Since $\dim P_n=n!$ this gives a more precise meaning of the above statement about the size of $P_n\cap I$. Recall that this exponential bound for the codimensions allowed Regev to prove that if $A$ and $B$ are both PI-algebras then their tensor product $A\otimes B$ is also a PI-algebra. 

But computing the exact values of the codimensions of a given algebra is also a very difficult task, and $c_n(A)$ is known for very few algebras $A$. This is exactly the same list as above, namely that of the algebras whose identities are known. Hence one is led to study the growth of the codimension sequences. In the eighties Amitsur conjectured that for each PI-algebra $A$, the sequence $(c_n(A))^{1/n}$ converges when $n\to\infty$ and moreover its limit is an integer. This conjecture was dealt with by Giambruno and Zaicev \cite{GiambrunoZaicev1998, GiambrunoZaicev1999} (see also \cite{GiambrunoZaicev2005book}): they answered in the affirmative Amitsur's conjecture. The above limit is called the PI-exponent of a PI-algebra, $\exp(A)$. Giambruno and Zaicev's important result initiated an extensive research concerning the asymptotic behaviour of the codimension sequences of algebras. It is well known (see for example \cite[Chapter 7]{GiambrunoZaicev2005book} and the references therein) that either the codimensions of $A$ are bounded by a polynomial function or grow exponentially. The results of Giambruno and Zaicev also hold for the case of graded algebras (\cite{AljadeffGiambruno2013, AljadeffGiambrunoLaMattina2011, GiambrunoLaMattina2010}), algebras with involution (\cite{GiambrunoMiliesValenti2017}), superalgebras with superinvolution, graded involution or pseudoinvolution (\cite{Ioppolo2018, Ioppolo2020, Santos2017}) and also for large classes of non-associative algebras. It is useful to highlight that there are examples of non-associative algebras such that their PI-exponent exists but is not an integer and also examples where the PI-exponent does not exist at all. 

In this paper we study trace polynomial identities. We focus our attention on two commutative subalgebras of $UT_2$, the algebra of $2 \times 2$ upper-triangular matrices over $F$: $D_2$ the algebra of diagonal matrices and $C_2 = \mbox{span}_F \{e_{11}+ e_{22}, e_{12} \}$. In \cite[Theorem 2.1]{Berele1996} A. Berele described the ideal of trace identities for the algebra $D_n$; he proved that it is generated by the commutativity law and by the Hamilton--Cayley polynomial. The asymptotic behaviour of the codimensions of trace identities was studied by A. Regev. In fact he described in \cite{Regev1988} the asymptotics of the ordinary codimensions of the full matrix algebra, and in \cite{Regev1984} he proved that the ordinary and the trace codimensions of the full matrix algebra are asymptotically equal. 

Our main goal in this paper is the description of the varieties of trace algebras that are of \textsl{almost polynomial growth}. This means that the codimensions of the given variety are of exponential growth but each proper subvariety is of polynomial growth. The description we obtain is in terms of excluding the algebras $D_2$ and $C_2$ with non-zero traces and the algebra $UT_2$ with the zero trace. As a by-product of the proof we obtain that the codimension growth of the trace identities of a finite dimensional algebra is either polynomially or exponentially bounded. 

It is interesting to note that if one considers the algebra $D_n$ of the diagonal $n\times n$ matrices without a trace, it is commutative and non-nilpotent, and hence its codimensions are equal to 1. But when adding a trace then it becomes of exponential growth. Hence in the case of diagonal matrices there cannot be a direct analogue of Regev's theorem mentioned above.

We recall here that similar descriptions for the ordinary codimensions can be found in \cite[Theorem 7.2.7]{GiambrunoZaicev2005book} where it was proved that the only two varieties of (ordinary) algebras of almost polynomial growth are the ones generated by the Grassmann algebra $E$ and by $UT_2$. In the case of algebras with involution, superinvolution, pseudoinvolution or graded by a finite group, a complete list of varieties of algebras of almost polynomial growth  was exihibited in \cite{GiambrunoIoppoloLaMattina2016, GiambrunoIoppoloLaMattina2019, GiambrunoMishchenko2001bis, GiambrunoMishchenkoZaicev2001, IoppoloLaMattina2017, IoppoloMartino2018, LaMattina2015, Valenti2011}.

In order to obtain our results we use methods from the theory of trace polynomial identities together with a version of the Wedderburn--Malcev theorem for finite dimensional trace algebras. Here we recall that a trace function on the matrix algebra $M_n(F)$ is just a scalar multiple of the usual matrix trace. In sharp contrast with this there are very many traces on $D_n$ and $C_2$: these algebras are commutative and hence a trace is just a linear function from them into $F$. 

\section{Preliminaries}

Throughout this paper $F$ will denote a field of characteristic zero and $A$ a unitary associative $F$-algebra with trace $\tr$. We say that $A$ is an algebra with trace if it is endowed with a linear map $\tr\colon A \rightarrow F$ such that for all $a,b \in A$ one has
\[
\tr(ab) = \tr(ba).
\]
In what follows, we shall identify, when it causes no misunderstanding, the element $\alpha \in F$ with $\alpha \cdot 1$, where $1$ is the unit of the algebra. 

Accordingly, one can construct $\Ft$, the free algebra with trace on the countable set $X = \{ x_1, x_2, \ldots \}$, where $\Tr$ is a formal trace. Let $\mathcal{M}$ denote the set of all monomials in the elements of $X$. Then $\Ft$ is the algebra generated by the free algebra $\F$ together with the set of central (commuting) indeterminates $\Tr(M)$, $M \in \mathcal{M}$, subject to the conditions that $\Tr(MN) = \Tr(NM)$,
and $\Tr(\Tr(M)N)=\Tr(M)\Tr(N)$, for all $M$, $N \in \mathcal{M}$. In other words, 
\[
\Ft \cong \F \otimes F[\Tr(M) : M \in \mathcal{M}].
\]
The elements of the free algebra with trace are called trace polynomials. 

A trace polynomial $f(x_1, \ldots, x_n, \Tr) \in \Ft$ is a trace identity for $A$ if, after substituting the variables $x_i$ with arbitrary elements $a_i \in A$ and $\Tr$ with the trace $\tr$, we obtain $0$. We denote by $\Id^{tr}(A)$ the set of trace identities of $A$, which is a trace $T$-ideal ($T^{tr}$-ideal) of the free algebra with trace, i.e., an ideal invariant under all endomorphisms of $\Ft$. 

As in the ordinary case, $\Id^{tr}(A)$ is completely determined by its multilinear polynomials.

\begin{Definition}
The vector space of multilinear elements of the free algebra with trace in the first $n$ variables is called the space of multilinear trace polynomials in $x_1$, \dots, $x_n$ and it is denoted by $MT_n$ ($MT$ comes from \textsl{mixed trace}). Its elements are linear combinations of expressions of the type
\[
\Tr(x_{i_1} \cdots x_{i_a}) \cdots \Tr(x_{j_1} \cdots x_{j_b}) x_{l_1} \cdots x_{l_c}
\]
where $	\left \{ i_1, \ldots, i_a, \ldots, j_1, \ldots, j_b, l_1, \ldots, l_c \right \} = \left \{ 1, \ldots, n \right \} $.
\end{Definition}

The non-negative integer
\[
c_n^{tr}(A) = \dim_F \dfrac{ MT_n}{MT_n \cap \Id^{tr}(A)} 
\]
is called the $n$-th trace codimension of $A$.

A prominent role among the elements of $MT_n$ is played by the so-called pure trace polynomials, i.e., polynomials such that all the variables $x_1$, \dots, $x_n$ appear inside a trace.

\begin{Definition}
The vector space of multilinear pure trace polynomials in $x_1$, \dots, $x_n$ is the space
\[
PT_n = \spa_F 	\left \{ \Tr(x_{i_1} \cdots x_{i_a}) \cdots \Tr(x_{j_1} \ldots x_{j_b}) : \left \{ i_1, \ldots, j_b \right \} = \left \{ 1, \ldots, n \right \} \right \}. 
\]
\end{Definition}

For a permutation $\sigma \in S_n$ we write (in \cite{Berele1996}, Berele uses $\sigma$ instead of $\sigma^{-1}$)
\[
\sigma^{-1} = \left ( i_1 \cdots i_{r_1} \right ) \left ( j_1 \cdots j_{r_2} \right ) \cdots \left ( l_1 \cdots l_{r_t} \right )
\]
as a product of disjoint cycles, including one-cycles and let us assume that $r_1 \geq r_2 \geq \cdots \geq r_t$. In this case we say that $\sigma$ is of cyclic type $\lambda = (r_1, \ldots, r_t)$. 
We then define the pure trace monomial $ptr_{\sigma} \in PT_n$ as
\[
ptr_{\sigma}(x_1, \ldots, x_n) = \Tr \left ( x_{i_1} \cdots x_{i_{r_1}} \right ) \Tr \left ( x_{j_1} \cdots x_{j_{r_2}} \right ) \cdots \Tr \left ( x_{l_1} \cdots x_{l_{r_t}} \right ). 
\]
If $\displaystyle a = \sum_{\sigma \in S_n} \alpha_{\sigma} \sigma \in FS_n$, we also define $\displaystyle ptr_{a}(x_1, \ldots, x_n) = \sum_{\sigma \in S_n} \alpha_{\sigma} ptr_{\sigma}(x_1, \ldots, x_n)$.

It is useful to introduce also the so-called trace monomial $mtr_{\sigma} \in MT_{n-1}$. It is defined so that
\[
ptr_{\sigma}(x_1, \ldots, x_n) = \Tr \left ( mtr_{\sigma}(x_1, \ldots, x_{n-1}) x_n \right ).
\]

Let now $\varphi\colon FS_n \rightarrow PT_n$ be the map defined by $\varphi(a) = ptr_a(x_1, \ldots, x_n)$. Clearly $\varphi$ is a linear isomorphism and so $\dim_F PT_{n} = \dim_F FS_{n} = n!$. 

The following result is well known, and we include its proof for the sake of completeness.

\begin{Remark}
$\dim_F MT_n = (n+1)!$. 
\end{Remark}
\begin{proof}
In order to prove the result we shall construct an isomorphism of vector spaces between $PT_{n+1}$ and $MT_n$. This will complete the proof since $\dim_F PT_{n+1} = (n+1)!$. Let $\varphi\colon PT_{n+1} \rightarrow MT_n$ be the linear map defined by the equality
\[
\varphi 	\left ( \Tr(x_{i_1} \cdots x_{i_a})  \cdots \Tr(x_{j_1} \cdots x_{j_b}) \Tr(x_{l_1} \cdots x_{l_c}) \right ) = \Tr(x_{i_1} \cdots x_{i_a})  \cdots \Tr(x_{j_1} \cdots x_{j_b}) x_{l_1} \cdots x_{l_{c-1}}. 
\]
Here we assume, as we may, that $l_c=n+1$. It is easily seen that $\varphi$ is a linear isomorphism and we are done.
\end{proof}

\section{Matrix algebras with trace}

In this section we study matrix algebras with trace. Let $M_n(F)$ be the algebra of $ n \times n $ matrices over $F$. One can endow such an algebra with the usual trace on matrices, denoted $t_1$, and defined as
\[
t_1(a) = t_1  \begin{pmatrix} a_{11} & \cdots & a_{1n} \\ \vdots &
\ddots & \vdots \\ a_{n1} & \cdots &
a_{nn} \end{pmatrix}  = a_{11} + \cdots + a_{nn} \in F.
\]

We recall that every trace on $M_n(F)$ is proportional to the usual trace $t_1$. The proof of this statement is a well known result of elementary linear algebra, we give it here for the sake of completeness.

\begin{Lemma} \label{traces on matrices}
Let $f\colon M_n(F) \rightarrow F$ be a trace. Then there exists $\alpha \in F$ such that $f = \alpha t_1$.
\end{Lemma}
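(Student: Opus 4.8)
The plan is to reduce everything to the matrix units $e_{ij}$, which form a basis of $M_n(F)$, so that by linearity it suffices to compute the $n^2$ scalars $f(e_{ij})$ and then to read off a single proportionality constant. The only input I would use is the defining relation $f(ab)=f(ba)$, which I would feed carefully chosen products of matrix units, exploiting the multiplication rule $e_{ij}e_{kl}=\delta_{jk}e_{il}$.

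First I would dispose of the off-diagonal units. Fixing $i\neq j$ and writing $e_{ij}=e_{ii}e_{ij}$, the trace property gives $f(e_{ij})=f(e_{ii}e_{ij})=f(e_{ij}e_{ii})$; since $e_{ij}e_{ii}=\delta_{ji}e_{ii}=0$ for $i\neq j$, I conclude $f(e_{ij})=0$. Thus $f$ is supported on the diagonal units. Next I would show that these diagonal units all receive the same value. For $i\neq j$ one has the factorizations $e_{ii}=e_{ij}e_{ji}$ and $e_{jj}=e_{ji}e_{ij}$, and then $f(e_{ii})=f(e_{ij}e_{ji})=f(e_{ji}e_{ij})=f(e_{jj})$. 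Hence there is a common scalar $\alpha:=f(e_{11})$ with $f(e_{ii})=\alpha$ for every $i$.

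Finally, writing an arbitrary matrix as $a=\sum_{i,j}a_{ij}e_{ij}$ and using linearity, I obtain $f(a)=\sum_{i,j}a_{ij}f(e_{ij})=\alpha\sum_i a_{ii}=\alpha\, t_1(a)$, which is exactly the desired identity $f=\alpha t_1$.

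There is no genuine obstacle here: the argument is entirely elementary linear algebra. The only point requiring a moment's thought is the choice of factorizations of the matrix units, namely splitting $e_{ij}$ so that the commuted product either vanishes (in the off-diagonal case) or converts one diagonal unit into another (in the diagonal case). Once these two observations are in place, the rest is an immediate computation.
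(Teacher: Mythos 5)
Your proof is correct and follows essentially the same route as the paper's: kill the off-diagonal units by commuting a product that collapses to zero, equate the diagonal units by commuting a factorization of $e_{ii}$, and finish by linearity. The only cosmetic difference is that you use the two-factor identity $e_{ii}=e_{ij}e_{ji}$, $e_{jj}=e_{ji}e_{ij}$ where the paper uses a three-factor product $e_{11}=e_{1j}e_{jj}e_{j1}$; both are immediate.
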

\begin{proof} 
Let $e_{ij}$'s denote the matrix units. First we shall prove that $f(e_{ij}) = 0$ whenever $i \neq j$. In fact, since $f(ab) = f(ba)$, for all $a$, $b \in M_n(F)$, we get that 
\[
f(e_{ij}) = f(e_{ij} e_{jj}) = f(e_{jj} e_{ij}) = f(0) = 0. 
\]
Moreover $f(e_{jj}) = f(e_{11})$, for all $j = 2, \ldots, n$. Indeed $ f(e_{11}) = f(e_{1j} e_{jj} e_{j1}) = f(e_{j1} e_{1j} e_{jj}) = f(e_{jj})$. For any matrix $a \in M_n(F)$, $a = (a_{ij}) = \sum_{i,j} a_{ij} e_{ij}$, we get that
\[
f(a) = f \Bigl( \sum_{i,j} a_{ij} e_{ij} \Bigr) = \sum_{j= 1}^n a_{jj} f(e_{jj}) = f(e_{11}) t_1(a),
\]
and the proof is complete.
\end{proof}

In what follows we shall use the notation $t_\alpha$ to indicate the trace on $M_n(F)$ such that $t_\alpha = \alpha t_1$. Moreover, $M_n^{t_\alpha}$ will denote the algebra of $n \times n$ matrices endowed with the trace $t_\alpha$. 

In sharp contrast with the above result, there are very many different traces on the algebra $D_n = D_n(F)$ of $n \times n$ diagonal matrices over $F$.  

\begin{Remark}\label{traces_on_Dn}
If $\tr$ is a trace on $D_n$ then there exist scalars $\alpha_1$, \dots, $\alpha_n\in F$ such that for each diagonal matrix $a=\mbox{diag}(a_{11},\ldots, a_{nn})\in D_n$ one has $\tr(a) = \alpha_1 a_{11}+\cdots+ \alpha_n a_{nn}$. 
\end{Remark}

The algebra $D_n\cong F^n$ is commutative, and $D_n\cong F^n$ with component-wise operations. Hence a linear function $\tr \colon D_n\to F$ must be of the form stated in the remark. Clearly for each choice of the scalars $\alpha_i$ one obtains a trace on $D_n$, and we have the statement of the remark.

We shall denote with the symbol $t_{\alpha_1, \ldots, \alpha_n}$ the trace $\tr$ on $D_n$ such that, for all $a = \mbox{diag}(a_{11}, \ldots, a_{nn})$, $ \tr(a) = \alpha_1 a_{11} + \cdots + \alpha_n a_{nn}$. Moreover, $D_n^{t_{\alpha_1, \ldots, \alpha_n}}$ will indicate the algebra $D_n$  endowed with the trace $t_{\alpha_1, \ldots, \alpha_n}$. 

Let $(A, t)$ and $(B, t')$ be two algebras with trace. A homomorphism (isomorphism) of algebras $\varphi: A \rightarrow B$ is said to be a homomorphism (isomorphism) of algebras with trace if $\varphi(t(a)) = t'(\varphi(a))$, for any $a \in A$.

We have the following remark. 

\begin{Remark} \label{isomorphic Dn}
Let $S_n$ be the symmetric group of order $n$ on the set $\{ 1,2, \ldots, n \}$. For all $\sigma \in S_n$, the algebras $D_n^{t_{\alpha_1, \ldots, \alpha_n}}$ and $D_n^{t_{\alpha_{\sigma(1)}, \ldots, \alpha_{\sigma(n)}}}$ are isomorphic, as algebras with trace.
\end{Remark}
\begin{proof}
We need only to observe that the linear map $\varphi \colon D_n^{t_{\alpha_1, \ldots, \alpha_n}} \rightarrow D_n^{t_{\alpha_{\sigma(1)}, \ldots, \alpha_{\sigma(n)}}}$, defined by $ \varphi(e_{ii}) = e_{\sigma(i) \sigma(i)}$, for all $i = 1, \ldots, n$, is an isomorphism of algebras with trace.
\end{proof}

Recall that a trace function $\tr$ on an algebra $A$ is said to be degenerate if there exists a non-zero element $a \in A$ such that
\[
\tr(ab) = 0
\]
for every $b\in A$. This means that the bilinear form $f(x,y) = \tr(xy)$ is degenerate on $A$. 

In the following lemma we describe the non-degenerate traces on $D_n$. 

\begin{Lemma}\label{non_degenerate_traces_on_Dn}
Let $D_n^{t_{\alpha_1, \ldots, \alpha_n}}$ be the algebra of $n \times n$ matrices endowed with the trace  
$t_{\alpha_1, \ldots, \alpha_n}$. Such a trace is non-degenerate if and only if all the scalars $\alpha_i$ are non-zero. 
\end{Lemma}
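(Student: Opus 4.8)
The plan is to compute the Gram matrix of the symmetric bilinear form $f(x,y) = \tr(xy)$ with respect to the natural basis of $D_n$, and then invoke the standard fact that a bilinear form is non-degenerate precisely when its Gram matrix is invertible. The basis I would use is the set of diagonal matrix units $\{e_{11}, \ldots, e_{nn}\}$, which clearly spans $D_n$ and has $n$ elements.

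First I would record the values of $f$ on pairs of basis elements. Since $e_{ii} e_{jj} = \delta_{ij} e_{ii}$, we get $f(e_{ii}, e_{jj}) = \tr(e_{ii} e_{jj}) = \delta_{ij}\,\tr(e_{ii}) = \delta_{ij}\,\alpha_i$, using the description of the trace $t_{\alpha_1, \ldots, \alpha_n}$ from Remark \ref{traces_on_Dn}. Hence the Gram matrix of $f$ in this basis is the diagonal matrix $\mbox{diag}(\alpha_1, \ldots, \alpha_n)$.

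Next I would translate the definition of degeneracy given just before the statement into the language of this matrix. The trace is degenerate exactly when there is a nonzero $a = \mbox{diag}(a_{11}, \ldots, a_{nn})$ with $f(a, b) = 0$ for every $b \in D_n$; testing against $b = e_{jj}$ gives $\alpha_j a_{jj} = 0$ for each $j$. If every $\alpha_i$ is non-zero, these equations force $a_{jj} = 0$ for all $j$, so $a = 0$ and the form is non-degenerate. Conversely, if some $\alpha_k = 0$, then $a = e_{kk}$ is a nonzero element satisfying $\alpha_j a_{jj} = 0$ for all $j$, so the form is degenerate. This is the same as saying $\mbox{diag}(\alpha_1, \ldots, \alpha_n)$ is invertible iff all $\alpha_i \neq 0$, which yields the equivalence.

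There is no real obstacle here: the argument is a direct computation once the correct basis is chosen, and the only point requiring a little care is simply keeping the two implications straight, together with noting that the bilinear form is symmetric (so left and right degeneracy coincide) because $\tr(xy) = \tr(yx)$ and $D_n$ is commutative. The whole proof reduces to the observation that the associated Gram matrix is diagonal with the $\alpha_i$ on the diagonal.
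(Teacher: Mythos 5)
Your proof is correct and follows essentially the same route as the paper: both arguments reduce degeneracy to testing the form against the diagonal matrix units $e_{jj}$, obtaining the conditions $\alpha_j a_{jj}=0$, and then reading off the equivalence with all $\alpha_i$ being non-zero. The Gram-matrix packaging is a harmless cosmetic difference; the underlying computation is identical to the one in the paper.
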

\begin{proof}
Let $t_{\alpha_1, \ldots, \alpha_n}$ be non-degenerate and suppose that there exists $i$ such that $\alpha_i = 0$. Consider the matrix unit $e_{ii}$. It is easy to see that we reach a contradiction since, for any element $\mbox{diag}(a_{11},\ldots, a_{nn}) \in D_n$, we get
$$
t_{\alpha_1, \ldots, \alpha_n}(e_{ii} \mbox{diag}(a_{11},\ldots, a_{nn})) = t_{\alpha_1, \ldots, \alpha_n}(e_{ii} a_{ii} ) = \alpha_i a_{ii} = 0. 
$$ 

In order to prove the opposite direction, let us assume that all the scalars $\alpha_i$ are non-zero. Suppose, by contradiction, that the trace $t_{\alpha_1, \ldots, \alpha_n}$ is degenerate. Hence there exist a non-zero element $a = \mbox{diag}(a_{11},\ldots, a_{nn}) \in D_n$ such that $t_{\alpha_1, \ldots, \alpha_n}(ab) = 0$, for any $b \in D_n$. In particular, let $b = e_{ii}$, for $i=1, \ldots, n$. We have that 
$$
t_{\alpha_1, \ldots, \alpha_n}( a e_{ii}) = t_{\alpha_1, \ldots, \alpha_n}(\mbox{diag}(a_{11},\ldots, a_{nn}) e_{ii}) = t_{\alpha_1, \ldots, \alpha_n}(a_{ii} e_{ii} ) = \alpha_i a_{ii} = 0.
$$ 
Since $\alpha_i \neq 0$, for all $i = 1, \ldots, n$, we get that $a_{ii} = 0$ and so $a = 0$, a contradiction.
\end{proof}

\section{The algebras $D_2^{t_{\alpha, \beta}}$}

In this section we deal with the algebra $D_2$ of $2 \times 2$ diagonal matrices over the field $F$. In accordance with the results of Section $3$, we can define on $D_2$, up to isomorphism, only the following trace functions:
\begin{enumerate}
\item[1.] $t_{\alpha, 0}$, for any $\alpha \in F $, 
\vspace{0.1 cm}
\item[2.] $t_{\alpha, \alpha}$, for any non-zero $\alpha \in F $, 
\vspace{0.1 cm}
\item[3.] $t_{\alpha, \beta }$, for any distinct non-zero $\alpha, \beta \in F $.
\vspace{0.1 cm}
\end{enumerate}

In the first part of this section our goal is to find the generators of the trace $T$-ideals of the identities of the algebra $D_2$ endowed with all possible trace functions. 

Let us start with the case of $D_2^{t_{\alpha,0}}$. Recall that, if $\alpha = 0$, then $D_2^{t_{0,0}}$ is the algebra $D_2$ with zero trace. So $\I^{tr}(D_2^{t_{0,0}})$ is generated by the commutator $[x_1, x_2]$ and $\Tr(x)$ and $c_n^{tr}(D_2^{t_{0,0}}) = c_n(D_2^{t_{0,0}}) =1$.  

For $\alpha \neq 0$, we have the following result.

\begin{Theorem} \label{identities and codimensions of D2 t alpha 0}
Let $\alpha \in F \setminus \{ 0 \}$. The trace $T$-ideal $\Id^{tr}(D_2^{t_{\alpha, 0}})$ is generated, as a trace $T$-ideal, by the polynomials:
\begin{itemize}
\item[•] $ f_1 = [x_1,x_2]$,
\vspace{0.2 cm} 
\item[•] $f_2 = \Tr(x_1) \Tr(x_2) - \alpha \Tr(x_1 x_2)$.
\vspace{0.1 cm}
\end{itemize}
Moreover 
\[
c_n^{tr}(D_2^{t_{\alpha, 0}}) =  2^{n}.
\]
\end{Theorem}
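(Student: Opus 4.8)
The plan is to attack the two assertions separately, first establishing that the two polynomials $f_1$ and $f_2$ generate the whole trace $T$-ideal, and then computing the codimension. For the identities, I would argue that both $f_1$ and $f_2$ are genuine trace identities of $D_2^{t_{\alpha,0}}$. The first, $f_1=[x_1,x_2]$, holds because $D_2$ is commutative. For $f_2$, I would check it on the generators $e_{11}$ and $e_{22}$: since $t_{\alpha,0}(e_{11})=\alpha$, $t_{\alpha,0}(e_{22})=0$, and $t_{\alpha,0}(e_{11}e_{22})=t_{\alpha,0}(0)=0$, while $t_{\alpha,0}(e_{11}e_{11})=\alpha$, a short bilinear check on all pairs of idempotents confirms $\Tr(x_1)\Tr(x_2)=\alpha\Tr(x_1x_2)$. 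Let $T$ denote the trace $T$-ideal generated by $f_1$ and $f_2$; by construction $T\subseteq\Id^{tr}(D_2^{t_{\alpha,0}})$.

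The harder inclusion is $\Id^{tr}(D_2^{t_{\alpha,0}})\subseteq T$, which I would reduce to a dimension count of the relatively free algebra modulo $T$ in each multilinear component $MT_n$. The key is that modulo $f_1$ every product of the $x_i$ collapses to the commutative monomial $x_1\cdots x_n$ (within a trace the arguments may be reordered freely, and outside traces commutativity applies directly), and modulo $f_2$ every pure trace factor splits as $\Tr(x_{i_1}\cdots x_{i_k})\equiv \alpha^{-(k-1)}\Tr(x_{i_1})\cdots\Tr(x_{i_k})$. Iterating this, every element of $MT_n$ reduces modulo $T$ to a linear combination of the normal forms $\Tr(x_{i_1})\cdots\Tr(x_{i_s})\,x_{j_1}\cdots x_{j_{n-s}}$, indexed by the choice of which subset $S\subseteq\{1,\dots,n\}$ of variables sits inside traces (the order no longer matters). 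The number of such subsets is $2^n$, so $\dim_F MT_n/(MT_n\cap T)\le 2^n$.

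To complete the argument I would show these $2^n$ normal forms are linearly independent modulo $\Id^{tr}(D_2^{t_{\alpha,0}})$, which forces $c_n^{tr}(D_2^{t_{\alpha,0}})\ge 2^n$ and simultaneously yields equality in both the codimension claim and the generation claim (since $T\subseteq\Id^{tr}$ already gives $c_n^{tr}\le\dim MT_n/(MT_n\cap T)\le 2^n$, and a matching lower bound collapses all inequalities). For independence I would evaluate: substitute $x_i\mapsto\lambda_i e_{11}+\mu_i e_{22}$ and compute each normal form explicitly, noting that an inside-trace variable contributes a factor $\alpha\lambda_i$ (since $t_{\alpha,0}(\lambda_i e_{11}+\mu_i e_{22})=\alpha\lambda_i$) while outside-trace variables multiply as elements of $D_2$. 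Distinct subsets $S$ then produce distinct multilinear functions of the $2n$ parameters $\lambda_i,\mu_i$, giving independence.

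The main obstacle I expect is the reduction-to-normal-form step: one must verify carefully that the rewriting using $f_1$ and $f_2$ terminates at the claimed $2^n$ forms and that no further relations among them are forced inside the $T$-ideal (for instance, that a variable appearing inside a trace cannot be moved outside, or vice versa, by some consequence of the generators). Handling nested traces and the central behaviour of trace factors cleanly — in particular confirming that $\Tr(x_i)$ truly behaves as an independent central scalar for each $i$ modulo $T$ — is where the bookkeeping is most delicate, and it is precisely the final independence computation that guarantees nothing collapses further.
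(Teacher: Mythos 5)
Your proposal is correct and follows essentially the same route as the paper: verify $f_1,f_2$ are identities, reduce $MT_n$ modulo them to a spanning set of $2^n$ normal forms indexed by the subset of variables lying inside traces, and prove independence by evaluating on diagonal matrices. The only cosmetic differences are that you split each trace into a product of $\Tr(x_i)$'s where the paper merges them into a single $\Tr(x_{i_1}\cdots x_{i_k})$ (equivalent since $\alpha\neq 0$), and that you use generic diagonal matrices $\lambda_i e_{11}+\mu_i e_{22}$ where the paper uses the specialization sending each variable to $e_{11}$ or $e_{22}$.
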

\begin{proof}
It is clear that $T = \langle f_1, f_2 \rangle_{T^{tr}} \subseteq \Id^{tr}(D_2^{t_{\alpha, 0}})$. 

We need to prove the opposite inclusion. Let $f \in MT_n$ be a multilinear trace polynomial of degree $n$. It is clear that $f$ can be written (mod $T$) as a linear combination of the polynomials
\begin{equation} \label{non identities of D2 t alpha 0}
\Tr(x_{i_1} \cdots x_{i_k}) x_{j_1} \cdots x_{j_{n-k}}, 
\end{equation}
where $ \left \{ i_1, \ldots, i_k, j_1, \ldots, j_{n-k}  \right \} = \left \{ 1, \ldots, n \right \}$,  $i_1 < \cdots < i_k$ and $j_1 < \cdots < j_{n-k}$.

Our goal is to show that the polynomials in \eqref{non identities of D2 t alpha 0} are linearly independent modulo $ \Id^{tr}(D_2^{t_{\alpha, 0}})$. To this end, let $g = g(x_1, \ldots, x_n, \Tr)$ be a linear combination of the above polynomials which is a trace identity:
\[
g(x_1, \ldots, x_n, \Tr) = \sum_{I,J} a_{I,J} \Tr(x_{i_1} \cdots x_{i_k}) x_{j_1} \cdots x_{j_{n-k}},
\]
where $I = \{ x_{i_1}, \ldots, x_{i_k} \}$, $J = \{ x_{j_1}, \ldots, x_{j_{n-k}} \}$, and $i_1 < \cdots < i_k$, $j_1 < \cdots < j_{n-k}$. 

We claim that $g$ is actually the zero polynomial. Suppose that, for some fixed $I = \{ x_{i_1}, \ldots, x_{i_k} \}$ and $J = \{ x_{j_1}, \ldots, x_{j_{n-k}} \}$, one has that $a_{I,J} \neq 0$. We consider the following evaluation:
\[
x_{i_1} = \cdots = x_{i_k} = e_{11}, \ \ \ \ \ \ \ \
x_{j_1} = \cdots = x_{j_{n-k}} = e_{22}, \ \ \ \ \ \ \ \
\Tr = t_{\alpha, 0}. 
\]
It follows that $g(e_{11}, \ldots, e_{11}, e_{22}, \ldots, e_{22}, t_{\alpha,0}) = a_{I,J} \alpha e_{22} = 0$. Hence $a_{I,J} = 0$, a contradiction. The claim is proved and so
\[
\Id^{tr}(D_2^{t_{\alpha, 0}}) = T.
\]

Finally, in order to compute the $n$-th trace codimension sequence of our algebra, we have only to count how many elements in \eqref{non identities of D2 t alpha 0} there are. Fixed $k$, there are exactly $\binom{n}{k}$ elements of the type  $\Tr(x_{i_1} \cdots x_{i_k}) x_{j_1} \cdots x_{j_{n-k}}$, $i_1 < \cdots < i_k$, $j_1 < \cdots < j_{n-k}$. Hence the polynomials in \eqref{non identities of D2 t alpha 0} are exactly $ \sum_{k=0}^n \binom{n}{k} = 2^n$ and the proof is complete.
\end{proof}

Now, we consider $D_2^{t_{\alpha,\alpha}}$. Recall that, for any $\begin{pmatrix} 
a & 0 \\
0 & b
\end{pmatrix} \in D_2$, we have that $
t_{\alpha, \alpha} \begin{pmatrix} 
a & 0 \\
0 & b
\end{pmatrix} = \alpha (a +  b).$ 

\begin{Theorem} \label{identities of D2 talpha alpha}
Let $\alpha \in F \setminus \{ 0 \}$. The trace $T$-ideal $\Id^{tr}(D_2^{t_{\alpha, \alpha}})$ is generated, as a trace $T$-ideal, by the polynomials:
\begin{itemize}
\item[•] $ f_1 = [x_1,x_2]$,
\vspace{0.3 cm}
\item[•] $ f_3 = \alpha^2 x_1x_2 + \alpha^2 x_2 x_1 +  \Tr(x_1)\Tr(x_2) - \alpha \Tr(x_1)x_2 - \alpha \Tr(x_2)x_1 - \alpha \Tr(x_1 x_2) $.
\vspace{0.1 cm}
\end{itemize}
Moreover 
\[
c_n^{tr}(D_2^{t_{\alpha, \alpha}})= 2^n.
\]
\end{Theorem}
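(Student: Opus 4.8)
The plan is to follow the pattern of the proof of Theorem~\ref{identities and codimensions of D2 t alpha 0}. Write $T=\langle f_1,f_3\rangle_{T^{tr}}$. The inclusion $T\subseteq\Id^{tr}(D_2^{t_{\alpha,\alpha}})$ is the easy half: one checks by a direct substitution of diagonal matrices that $f_1$ and $f_3$ vanish. In fact $f_3$ is exactly the multilinearisation of the Cayley--Hamilton identity $x^2-t_1(x)x+\tfrac12(t_1(x)^2-t_1(x^2))=0$ of $2\times2$ matrices, rewritten through $t_{\alpha,\alpha}=\alpha t_1$ (so that $t_1=\alpha^{-1}\Tr$) and cleared of denominators; hence it holds on $D_2\subseteq M_2$. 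To obtain the reverse inclusion and the codimension it suffices to prove: (i) modulo $T$ the space $MT_n$ is spanned by at most $2^n$ explicit elements, and (ii) those elements are linearly independent modulo $\Id^{tr}(D_2^{t_{\alpha,\alpha}})$. Since $T\subseteq\Id^{tr}(D_2^{t_{\alpha,\alpha}})$, (i) and (ii) give $2^n\le \dim MT_n/(MT_n\cap\Id^{tr}(D_2^{t_{\alpha,\alpha}}))\le \dim MT_n/(MT_n\cap T)\le 2^n$, whence all inequalities are equalities, $MT_n\cap T=MT_n\cap\Id^{tr}(D_2^{t_{\alpha,\alpha}})$ for all $n$, and therefore $T=\Id^{tr}(D_2^{t_{\alpha,\alpha}})$ and $c_n^{tr}(D_2^{t_{\alpha,\alpha}})=2^n$.

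For (i) I propose the candidate spanning set
\[
m_S=\Big(\prod_{i\notin S}\Tr(x_i)\Big)\prod_{i\in S}x_i,\qquad S\subseteq\{1,\dots,n\},
\]
consisting of the $2^n$ monomials in which every trace contains exactly one variable. Using $f_1$ I may freely permute the variables inside each trace and inside the outer product. To reach the $m_S$ I must lower the length of all the traces to one. Solving $f_3=0$ for the summand $\Tr(x_1x_2)$ gives, modulo $f_1$,
\[
\Tr(x_1x_2)\equiv 2\alpha\,x_1x_2+\alpha^{-1}\Tr(x_1)\Tr(x_2)-\Tr(x_1)x_2-\Tr(x_2)x_1 ,
\]
so every trace of length two is replaced by monomials all of whose traces have length one. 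For a trace of length $\ell\ge 3$ I use that a $T^{tr}$-ideal is closed under the formal trace, so that $\Tr\big(f_3(x_1,x_2)\,x_3\cdots x_\ell\big)\in T$; reducing the two equal summands $\Tr(x_1x_2x_3\cdots x_\ell)\equiv\Tr(x_2x_1x_3\cdots x_\ell)$ by $f_1$ and solving, one expresses $\Tr(x_1\cdots x_\ell)$ as a combination of products of traces each of length at most $\ell-1$. Inducting on the maximal trace length (and, for fixed maximal length, on the number of traces attaining it) brings every monomial, modulo $T$, into the span of the $m_S$. This reduction of the long traces, together with the bookkeeping needed to see that the induction strictly decreases, is the step I expect to require the most care.

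For (ii) I evaluate on the $2^n$ assignments $x_i\in\{e_{11},e_{22}\}$: given $T\subseteq\{1,\dots,n\}$, set $x_i=e_{11}$ for $i\in T$ and $x_i=e_{22}$ otherwise, with $\Tr=t_{\alpha,\alpha}$. Since $t_{\alpha,\alpha}(e_{11})=t_{\alpha,\alpha}(e_{22})=\alpha$, each factor $\Tr(x_i)$ contributes $\alpha$, while $\prod_{i\in S}x_i$ equals $e_{11}$ precisely when $S\subseteq T$ and otherwise has zero $(1,1)$-entry. Hence the $(1,1)$-entry of $m_S$ under this evaluation is $\alpha^{\,n-|S|}$ if $S\subseteq T$ and $0$ otherwise. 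Thus a dependence $\sum_S c_S m_S\in\Id^{tr}(D_2^{t_{\alpha,\alpha}})$ yields $\sum_{S\subseteq T}c_S\,\alpha^{\,n-|S|}=0$ for every $T$; the matrix recording the relation ``$S\subseteq T$'' is unitriangular for the inclusion order, hence invertible, and since $\alpha\ne 0$ we get $c_S=0$ for all $S$. This proves the independence and completes the argument.
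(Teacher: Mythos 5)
Your proof is correct, but it takes a genuinely different route from the paper's. The paper disposes of the $T^{tr}$-ideal statement by citing Berele's Theorem~2.1 for $\alpha=1$ and remarking that his argument carries over verbatim for general $\alpha\neq 0$, and for the codimension it simply asserts that the $2^n$ monomials $\Tr(x_{i_1}\cdots x_{i_k})x_{j_1}\cdots x_{j_{n-k}}$ (a single, possibly long, trace times an ordered word) form a basis of $MT_n$ modulo the identities. You instead give a self-contained argument with a different normal form: the $2^n$ elements $m_S$ in which every trace has length one. Your reduction is sound --- $f_3$ itself eliminates length-two traces, and $\Tr\bigl(f_3(x_1,x_2)x_3\cdots x_\ell\bigr)$ rewrites a length-$\ell$ trace in terms of strictly shorter ones, with a well-founded induction on (maximal trace length, number of traces of that length) --- and your independence argument via the unitriangular subset-incidence matrix is clean and explicit, whereas the paper leaves the corresponding independence unproved. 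The one step you should flag is the use of closure of $\langle f_1,f_3\rangle_{T^{tr}}$ under the formal trace map $f\mapsto\Tr(fg)$: the paper's stated definition of a $T^{tr}$-ideal only mentions invariance under endomorphisms, but closure under $\Tr$ is the standard convention in the trace-identity literature (Razmyslov, Procesi, Berele) and is implicitly what the paper, following Berele, is using, so this is a matter of making the convention explicit rather than a gap. The trade-off is that the paper's proof is short but outsources the real work, while yours is longer but complete and yields the codimension count and the identification $T=\Id^{tr}(D_2^{t_{\alpha,\alpha}})$ in one sandwich argument.
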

\begin{proof}
In case $\alpha=1$, Berele (\cite[Theorem $2.1$]{Berele1996}) proved that $\Id^{tr}(D_2^{t_{\alpha, \alpha}}) = \langle f_1, f_3 \rangle_{T^{tr}}$. The proof when $\alpha\ne 1$ follows word by word that one given by Berele in \cite{Berele1996}. 

In order to find the trace codimensions, we remark that the trace polynomials
\begin{equation} \label{non identities of D2 with trace}
\Tr(x_{i_1} \cdots x_{i_k}) x_{j_1} \cdots x_{j_{n-k}}, \ \ \ \ \ \ \ \ \ \ \left \{ i_1, \ldots, i_k, j_1, \ldots, j_{n-k}  \right \} = \left \{ 1, \ldots, n \right \}, \ \  i_1 < \cdots < i_k, \ \ j_1 < \cdots < j_{n-k}, 
\end{equation}
form a basis of $MT_n \pmod{ MT_n \cap \Id^{tr}(D_2^{t_{\alpha, \alpha}})}$. Hence, their number, which is the $n$-th trace codimension sequence of $D_2^{t_{\alpha, \alpha}}$, is $\sum_{k=0}^n \binom{n}{k} = 2^n$ and the proof is complete.
\end{proof}

\begin{Remark}
\label{differentvarieties}
Here we observe a curious fact. It follows from Theorems \ref{identities and codimensions of D2 t alpha 0} and \ref{identities of D2 talpha alpha} that the relatively free algebras in the varieties  of algebras with trace generated by $D_2^{t_{\alpha,0}}$ and by $D_2^{t_{\alpha,\alpha}}$ are quite similar. In fact the multilinear components of degree $n$ in these two relatively free algebras are isomorphic. But this is an isomorphism of vector spaces which cannot be extended to an isomorphism of the corresponding algebras. It can be easily seen that neither of these two varieties is a subvariety of the other as soon as $\alpha\ne 0$. 
\begin{enumerate}
\item
The trace identity $f_2=Tr(x_1)Tr(x_2)-\alpha Tr(x_1x_2)$ does not hold for the algebra $D_2^{t_{\alpha,\alpha}}$. One evaluates it on the ``generic" diagonal matrices $d_1=diag(a_1,b_1)$ and $d_2=diag(a_2,b_2)$ and gets $\alpha^2(a_1b_2 + a_2b_1)$ which does not vanish on $D_2^{t_{\alpha,\alpha}}$. 
\item
Likewise $D_2^{t_{\alpha,0}}$ does not satisfy the trace identity $f_3$. Once again substituting $x_1$ and $x_2$ in $f_3$ by the generic matrices $d_1$ and $d_2$ we get a diagonal matrix with 0 at position $(1,1)$ and a non-zero entry $\alpha^2(2b_1b_2  -a_1b_2-a_2b_1)$ at position $(2,2)$. 
\end{enumerate}
This question is addressed in a more general form in Lemmas~\ref{D2 delta 0 no T equ}, \ref{D2 gamma gamma no T equ}, and \ref{D2 alfa beta no T equ}.
\end{Remark}

Finally we consider the trace algebra $D_2^{t_{\alpha,\beta}}$. Recall that, for any $\begin{pmatrix} 
a & 0 \\
0 & b
\end{pmatrix} \in D_2$, we have that
$
t_{\alpha, \beta} \begin{pmatrix} 
a & 0 \\
0 & b
\end{pmatrix} = \alpha a + \beta b.
$ 

\begin{Theorem} \label{identities and codimensions of D2 talpha beta}
Let $\alpha$, $\beta \in F \setminus \{ 0 \}$, $\alpha \neq \beta$. As a trace $T$-ideal, $\Id^{tr}(D_2^{t_{\alpha, \beta}})$ is generated by the polynomials:
\begin{itemize}
\vspace{0.1 cm}
\item[•] $ f_1 = [x_1,x_2]$,
\vspace{0.3 cm} 
\item[•] $f_4 = -x_1 \Tr(x_2) \Tr(x_3) + (\alpha + \beta) x_1 \Tr(x_2 x_3)  + x_3 \Tr(x_1) \Tr(x_2) - (\alpha + \beta) x_3 \Tr(x_1 x_2) - \Tr(x_1) \Tr(x_2 x_3) + \Tr(x_3)\Tr(x_1 x_2)$,
\vspace{0.3 cm} 
\item[•] $f_5 = \Tr(x_1) \Tr(x_2) \Tr(x_3) - (\alpha \beta^2 + \alpha^2 \beta) x_1 x_2 x_3 + \alpha \beta x_1 x_2 \Tr(x_3) + \alpha \beta x_1 x_3 \Tr(x_2) + \alpha \beta x_2 x_3 \Tr(x_1) - (\alpha + \beta) x_1 \Tr(x_2) \Tr(x_3) + (\alpha^2 + \alpha \beta + \beta^2) x_1 \Tr(x_2 x_3) - \alpha \beta x_2 \Tr(x_1 x_3) - \alpha \beta x_3 \Tr(x_1 x_2) + \alpha \beta \Tr(x_1 x_2 x_3) - (\alpha + \beta)  \Tr(x_1) \Tr(x_2 x_3)$.
\end{itemize}
Moreover 
\[
c_n^{tr}(D_2^{t_{\alpha, \beta}}) =  2^{n+1}-n-1.
\]
\end{Theorem}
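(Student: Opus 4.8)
The plan is to follow the same three–step scheme used in the proofs of Theorems \ref{identities and codimensions of D2 t alpha 0} and \ref{identities of D2 talpha alpha}. First I would check the easy inclusion $T=\langle f_1,f_4,f_5\rangle_{T^{tr}}\subseteq \Id^{tr}(D_2^{t_{\alpha,\beta}})$ by substituting generic diagonal matrices $d_i=\mbox{diag}(a_i,b_i)$ and the trace $t_{\alpha,\beta}$ into $f_1,f_4,f_5$ and verifying that each diagonal entry vanishes identically in the $a_i,b_i$; this is a routine computation in the commutative ring $F[a_i,b_i]$. The reverse inclusion and the codimension formula would then be obtained simultaneously, by squeezing $c_n^{tr}(D_2^{t_{\alpha,\beta}})$ between a lower bound coming from a linear independence argument and an upper bound coming from a spanning (normal form) argument, the two bounds agreeing at $2^{n+1}-n-1$.

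For the upper bound I would first use $f_1$, together with the defining relations of the free trace algebra, to reduce an arbitrary multilinear trace monomial, modulo $T$, to one depending only on its underlying set partition: a distinguished (possibly empty) ``free'' block together with an unordered collection of ``trace'' blocks. These configurations span $MT_n$ modulo $\langle f_1\rangle$ (there are $B_{n+1}$ of them, $B$ the Bell numbers, by the bijection with set partitions of $\{1,\dots,n+1\}$). The heart of the argument is then to use $f_4$ and $f_5$, \emph{in all degrees} — that is, after substituting monomials for $x_1,x_2,x_3$ and multiplying by outside variables and traces — as rewriting rules: broadly, $f_5$ eliminates configurations with three or more trace blocks, while $f_4$ eliminates the remaining two–trace–block configurations not of the required shape. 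The claim is that what survives is exactly the set $B$ consisting of the $2^n$ single–trace–block monomials $\Tr(x_{i_1}\cdots x_{i_k})x_{j_1}\cdots x_{j_{n-k}}$ (ordered, $\{i_1,\dots,i_k,j_1,\dots,j_{n-k}\}=\{1,\dots,n\}$) together with the $2^n-n-1$ ``singleton–split'' monomials $\Tr(x_m)\Tr(x_{s_1}\cdots x_{s_r})\,w_S$, indexed by the subsets $S=\{m,s_1,\dots,s_r\}$ with $|S|\ge 2$, where $m=\min S$ and $w_S$ is the increasing product of the variables outside $S$. Counting gives $|B|=2^{n+1}-n-1$, whence $\dim_F MT_n/(MT_n\cap T)\le 2^{n+1}-n-1$.

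For the lower bound I would show that the monomials of $B$ are linearly independent modulo $\Id^{tr}(D_2^{t_{\alpha,\beta}})$. The natural evaluations are on the idempotents $e_{11},e_{22}$ (equivalently, on generic diagonal matrices): a free product of matrix units collapses to $e_{11}$, $e_{22}$ or $0$, and a trace block evaluates to $\alpha$, $\beta$ or $0$ according to whether its variables are all set to $e_{11}$, all to $e_{22}$, or mixed. Feeding a hypothetical dependence through all colourings in $\{e_{11},e_{22}\}^n$ produces a linear system in the unknown coefficients that becomes invertible precisely because $\alpha\neq\beta$ and $\alpha,\beta\neq 0$ — this is exactly what separates the single–block monomials from the singleton–split ones and reproduces, in this richer setting, the argument used for the two previous traces. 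Hence $c_n^{tr}(D_2^{t_{\alpha,\beta}})\ge 2^{n+1}-n-1$.

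Combining the two bounds with $T\subseteq\Id^{tr}(D_2^{t_{\alpha,\beta}})$ gives
\[
2^{n+1}-n-1\ \le\ c_n^{tr}(D_2^{t_{\alpha,\beta}})\ \le\ \dim_F MT_n/(MT_n\cap T)\ \le\ 2^{n+1}-n-1,
\]
so every inequality is an equality; this simultaneously yields the codimension formula and $MT_n\cap T=MT_n\cap\Id^{tr}(D_2^{t_{\alpha,\beta}})$ for all $n$, i.e. $\Id^{tr}(D_2^{t_{\alpha,\beta}})=T$. The main obstacle is clearly the normal form step: verifying that $f_4$ and $f_5$, lifted to arbitrary degree, really do reduce every partition–configuration to a member of $B$, and that the process terminates at a spanning set of the conjectured size rather than a smaller one. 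This is where the precise shape of $f_4,f_5$ and the hypotheses $\alpha\neq\beta$, $\alpha,\beta\neq0$ are genuinely used, and where careful bookkeeping of the rewriting is required.
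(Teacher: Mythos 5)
Your proposal is correct and follows essentially the same route as the paper's proof: verify the easy inclusion on generic diagonal matrices, use $f_5$ to eliminate three or more traces and $f_4$ (with induction on the degree inside traces) to reduce two-trace monomials to a normal form with one singleton trace, then prove linear independence of the resulting $2^{n+1}-n-1$ monomials by evaluating on generic diagonal matrices and exploiting the nonvanishing of $\alpha\beta(\alpha-\beta)$. The only cosmetic difference is that you split off $\min S$ rather than $\max S$ into the singleton trace, which changes nothing.
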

\begin{proof}
Write $I = \langle f_1, f_4, f_5 \rangle_{T^{tr}}$. An immediate (but tedious) verification shows that $ I \subseteq \Id^{tr}(D_2^{t_{\alpha, \beta}})$. In order to obtain the opposite inclusion, first we shall prove that the polynomials 
\begin{equation} \label{non identities of D2 alfa beta 1}
x_{i_1} \cdots x_{i_{k}} \Tr(x_{h_1} \cdots x_{h_{n-k}}), \ \ \ \ \
x_{i_1} \cdots x_{i_{k}} \Tr(x_{j_1} \cdots x_{j_{s-1}}) \Tr(x_{j_s}), 
\end{equation}
where $  i_1 < \cdots < i_k$, $ h_1 < \cdots < h_{n-k}$ and $ j_1 < \cdots < j_{s-1} < j_s$, span $ MT_n $, modulo $ MT_n \cap I $, for every $n \geq 1$. 

In order to achieve this goal we shall use an induction. Let $f \in MT_n$ be a multilinear trace polynomial of degree $n$. Hence it is a linear combination of polynomials of the type
\[
 x_{i_1} \cdots x_{i_a} \Tr(x_{j_1} \cdots x_{j_b}) \cdots \Tr(x_{l_1} \cdots x_{l_c})
\]
where $	\left \{ i_1, \ldots, i_a, j_1, \ldots, j_b, \ldots, l_1, \ldots, l_c \right \} = \left \{ 1, \ldots, n \right \} $.

Because of the identity $f_5 \equiv 0$, we can kill all products of three traces (and more than three traces). So we may consider only monomials with either no trace, or with one, or with two traces. Clearly the identity $f_1$ implies that we can assume all of these monomials ordered,  outside and also inside each trace.

In the case of monomials with two traces, now we want to show how to reduce one of these traces to be of a monomial of length $1$ (that is a variable). To this end, in $f_4$ take $x_1$ as a letter, and $x_2$, $x_3$ as monomials. The last term is ``undesirable'', and it is written as a combination of either one trace, or two traces where one of these is a trace of a letter (that is $x_1$). The only problem is the first term of $f_4$. But it has a letter outside the traces. Since the total degree of the monomials inside traces in the first summand of $f_4$ will be less than the initial one, we can apply the induction.

Suppose now we have a linear combination of monomials where we have either no traces (at most one of these), or just one trace (and the variables outside the trace are ordered, as well as those inside the trace), or two traces. In the latter case we may assume that the variables outside the trace are ordered, and that these in the first trace are ordered (increasing) as well. And moreover the second trace is of a variable, not monomial. In this case we are concerned with the monomials having two traces. We use once again $f_4$, in fact the last two summands in it, to exchange
variables between the two traces. We get then monomials with one trace or with two traces but of lower degree inside the traces, and as above continue by induction.

In conclusion, we can suppose that in the case of two traces, the variables are ordered in the following way:
\[
x_{i_1} \cdots x_{i_a} \Tr(x_{j_1}\cdots x_{j_b}) \Tr(x_j)
\]
where $i_1<\cdots<i_a$ and $j_1<\cdots<j_b<j$. 

We next show that the polynomials in \eqref{non identities of D2 alfa beta 1} are linearly independent modulo $ \Id^{tr}(D_2^{t_{\alpha, \beta}})$.

Let us take generic diagonal matrices $X_i=(a_i, b_i)$, that is we consider $a_i$ and $b_i$ as commuting independent variables. If the monomials we consider are not linearly independent there will be a non-trivial linear combination among them which vanishes. Form such a linear combination and evaluate it on the above defined generic diagonal matrices $X_i$. We order the monomials in $a_i$ and $b_i$ obtained in the linear combination, at positions $(1,1)$ and $(2,2)$ of the resulting matrix, as follows. 

In the first coordinate (that is position $(1,1)$ of the matrices) we consider $a_1<\cdots<a_n<b_1<\cdots<b_n$, in the second coordinate $(2,2)$ of the matrices in $D_2$ we take $b_1<\cdots<b_n<a_1<\cdots<a_n$. Then we extend this order lexicographically to all monomials in $F[a_i, b_i]$. In fact these are two orders, one for position $(1,1)$ and another for position $(2,2)$ of the diagonal matrices.

In order to simplify the notation, let us assume that the largest monomial in the first coordinate is $a_1 \cdots a_k b_{k+1} \cdots b_n$. If $k=n$ then there is no trace at all. The case $k=n-1$ is also clear: we have only one trace and it is $\Tr(x_n)$. So take $k\le n-2$. Such a monomial can come from either $M = x_1\cdots x_k \Tr(x_{k+1}\cdots x_n)$ or from $N = x_1\cdots x_k \Tr(x_{k+1}\cdots x_{n-1})\Tr(x_n)$. Clearly in the second coordinate the largest monomial will be $b_1\cdots b_k a_{k+1}\cdots a_n$, and it comes from the above two elements only.

Now suppose that a linear combination of monomials vanishes on the generic matrices $X_i$. Then the largest monomials will cancel and so there will exist scalars $p$,
$q\in F$  such that the largest monomials in $pM+qN$ will cancel. This means, after computing the traces, that $p\beta + q\beta^2 = 0$ and $p\alpha+q\alpha^2=0$. Consider $p$ and $q$ as variables in a $2\times
2$ system. The determinant of the system is $\alpha\beta(\alpha-\beta)$. Since $\alpha$ and $\beta$ are both non-zero and since $\alpha\ne \beta$, we get that $p=q=0$ and we cancel out the largest monomials. 

In conclusion the polynomials in \eqref{non identities of D2 alfa beta 1} are linearly independent modulo $ \Id^{tr}(D_2^{t_{\alpha, \beta}})$. 

Since $ MT_n \cap \Id^{tr}(D_2^{t_{\alpha, \beta}})  \supseteq MT_n \cap I $, they form a basis of $ MT_n$ modulo $MT_n \cap \Id^{tr}(D_2^{t_{\alpha, \beta}}) $ and  $ \Id^{tr}(D_2^{t_{\alpha, \beta}}) = I $.

Finally, in order to compute the codimension sequence of our algebra, we have only to observe the following facts. We have only one monomial with no trace at all, and exactly $n$ monomials where $n-1$ letters are outside the traces (and the remaining one is inside a trace). Then we have $2{n\choose s}$ elements of the type 
\[
x_{i_1}\cdots x_{i_k} \Tr(x_{j_1}\cdots x_{j_s}) \mbox{\ \ \ \ \  or \ \ \ \ \ } x_{i_1}\cdots x_{i_k} \Tr(x_{j_1}\cdots x_{j_{s-1}}) \Tr(x_{j_s}), \qquad k+s=n. 
\]
In conclusion we get that 
$$ 
c_n^{tr}(D_2^{t_{\alpha, \beta}}) = \binom{n}{0} + 	\binom{n}{1} + 2\sum_{s = 2}^{n} \binom{n}{s} = 2^{n+1}-n-1.$$
\end{proof}

Given a variety $\mathcal{V}$ of algebras with trace, the growth of $\mathcal{V}$ is the growth of the sequence of trace codimensions of any algebra $A$ generating $\mathcal{V}$, i.e., $\mathcal{V} = \V^{tr}(A) $. We say that $\mathcal{V}$ has almost polynomial growth if it grows exponentially but any proper subvariety has polynomial growth. 

In the following theorem we prove that the algebras $D_2^{t_{\alpha, \alpha}}$ generate varieties of almost polynomial growth.

\begin{Theorem} \label{D2 alfa APG}
The algebras $D_2^{t_{\alpha, \alpha}}$, $\alpha \in F \setminus \{ 0 \}$, generate varieties of almost polynomial growth.
\end{Theorem}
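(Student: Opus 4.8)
The plan is to prove the two halves of almost polynomial growth separately: that $\V^{tr}(D_2^{t_{\alpha,\alpha}})$ itself grows exponentially, and that every proper subvariety grows polynomially. The first half is immediate from Theorem~\ref{identities of D2 talpha alpha}, since $c_n^{tr}(D_2^{t_{\alpha,\alpha}})=2^n$; so the whole content lies in the second half, and for this I would argue through the $S_n$-cocharacter. The first step is to read the cocharacter off the explicit basis obtained in the proof of Theorem~\ref{identities of D2 talpha alpha}: modulo $\Id^{tr}(D_2^{t_{\alpha,\alpha}})$ every multilinear trace polynomial is a combination of the elements $\Tr(\prod_{i\in S}x_i)\prod_{j\notin S}x_j$ indexed by subsets $S\subseteq\{1,\dots,n\}$, and since $f_1=[x_1,x_2]$ forces commutativity both inside and outside the single surviving trace, the span $M_k$ of the elements with $|S|=k$ is exactly the permutation module on the $k$-subsets. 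Decomposing each $M_k$ into irreducibles, I expect the cocharacter to be supported on two-row partitions, namely
\[
\chi_n^{tr}(D_2^{t_{\alpha,\alpha}})=\sum_{i=0}^{\lfloor n/2\rfloor}(n-2i+1)\,\chi_{(n-i,i)},
\]
the multiplicity $n-2i+1$ counting the layers $M_i,\dots,M_{n-i}$ that each contribute one copy of $S^{(n-i,i)}$.

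The next step is the criterion I will use to detect polynomial growth. Since $\dim S^{(n-i,i)}=\binom{n}{i}-\binom{n}{i-1}$ is polynomial in $n$ for fixed $i$ but grows exponentially once $i$ is comparable with $n$, a subvariety $\mathcal{W}\subseteq\V^{tr}(D_2^{t_{\alpha,\alpha}})$ has polynomially bounded codimensions if and only if there is a constant $c$ with $m_{(n-i,i)}(\mathcal{W})=0$ for all $i>c$ and all large $n$; the inherited multiplicities, being at most $n-2i+1$, are then automatically polynomial. Hence it suffices to prove that a proper subvariety has uniformly bounded second row.

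For a proper $\mathcal{W}$ there is a multilinear $f\in\Id^{tr}(\mathcal{W})\setminus\Id^{tr}(D_2^{t_{\alpha,\alpha}})$, say of degree $d$; reducing $f$ to the basis above shows that $\mathcal{W}$ annihilates a nonzero submodule in degree $d$, hence at least one copy of some $\chi_{(d-i_0,i_0)}$ with $i_0\ge 1$. I would then propagate this vanishing using the closure of $\Id^{tr}(\mathcal{W})$ under $T^{tr}$-operations. Right-multiplication of a highest weight vector by a fresh variable outside all traces, recovered by the substitution $x_{n+1}=1$ (legitimate because $D_2$ is unital), propagates the vanishing upward in $n$ at fixed second row. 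Appending a variable \emph{inside} the trace — realized by multiplying by $\Tr(x_{n+1})$ and invoking $f_3$, whose reduction of $\Tr(x_1)\Tr(x_2)$ produces the term $\alpha\Tr(x_1x_2)$ of trace-length one greater — links the copy of $S^{(n-i,i)}$ sitting in $M_k$ to copies in $M_{k+1}$ while feeding a box into the second row, moving second row $i$ to $i+1$. Iterating, I expect the single vanishing at $(d-i_0,i_0)$ to force $m_{(n-i,i)}(\mathcal{W})=0$ for every $i\ge i_0$ once $n$ is large, so the second row is bounded by $i_0-1$ and $c_n^{tr}(\mathcal{W})=O(n^{2i_0-1})$ is polynomial.

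The main obstacle I anticipate is exactly this last propagation. Because each $\chi_{(n-i,i)}$ occurs with the \emph{growing} multiplicity $n-2i+1$, spread across the distinct and, within a fixed degree, independent layers $M_k$, killing one copy is not enough: I must show that the $T^{tr}$-consequences of the single extra identity reach and annihilate every copy in every layer in all sufficiently high degrees. Making the two propagation operations interact so as to cover the whole tail $\{(n-i,i):i\ge i_0\}$ uniformly, and verifying that they genuinely annihilate rather than merely lower multiplicities, is the delicate point where I would concentrate the effort, most likely isolating it as a separate lemma on the layer structure of the relatively free algebra.
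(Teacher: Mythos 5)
Your reduction of the problem to cocharacters is a genuinely different route from the paper's, and the preparatory steps are sound: the layers $M_k$ are indeed permutation modules on $k$-subsets, Young's rule gives the decomposition into two-row irreducibles, the multiplicity count $n-2i+1$ is correct, and (because these multiplicities are at most $n+1$) polynomial growth of a subvariety is equivalent to a uniform bound on the second row. But the proof as written has a genuine gap, and it is exactly the one you flag at the end: the propagation step is asserted, not proved. Killing one copy of $S^{(d-i_0,i_0)}$ in degree $d$ does not obviously kill the other $d-2i_0$ copies sitting in the remaining layers of the same degree, let alone all copies of all $S^{(n-i,i)}$ with $i\ge i_0$ in higher degrees. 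Your two operations (setting $x_{n+1}=1$, and multiplying by $\Tr(x_{n+1})$ and reducing via $f_3$) are plausible tools, but you have not shown that they carry a killed highest weight vector onto the \emph{whole} isotypic component rather than merely into it; since the multiplicities grow with $n$, lowering a multiplicity by one per consequence is not enough, and nothing in the proposal rules that out. The claimed final bound (second row at most $i_0-1$) is also stronger than anything the paper obtains and would itself need justification. As it stands, the argument is a plan whose decisive lemma is missing.

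For comparison, the paper sidesteps representation theory entirely. It writes the extra identity $f$ of the proper subvariety in the basis $\Tr(x_{i_1}\cdots x_{i_k})x_{j_1}\cdots x_{j_{n-k}}$, lets $M$ be the largest trace-length occurring with nonzero coefficient, and then, for any higher-degree monomial with more than $2M$ variables inside the trace, substitutes into $f$ products of variables $y_1,\dots,y_M$ (blocks of the long trace word) together with single variables. Because $f\equiv 0$ on the subvariety, this rewrites the long-trace monomial as a combination of monomials with strictly fewer variables inside the trace; iterating gives $c_n^{tr}(A)\le\sum_{k=0}^{2M}\binom{n}{k}$, which is polynomial. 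This block-substitution trick is the ingredient your approach would need an analogue of, and it is considerably more elementary than the layer-by-layer module analysis you would have to carry out.
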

\begin{proof}
By Theorem \ref{identities of D2 talpha alpha}, the variety generated by $D_2^{t_{\alpha, \alpha}}$ has exponential growth. 

We are left to prove that any proper subvariety of $\V^{tr}(D_2^{t_{\alpha, \alpha}})$ has polynomial growth. Let $\V^{tr}(A) \subsetneq \V^{tr}(D_2^{t_{\alpha, \alpha}})$. Then there exists a multilinear trace polynomial $f$ of degree $n$ which is a trace identity for $A$ but not for $D_2^{t_{\alpha, \alpha}}$. We can write $f$ as
\begin{equation} 
f = \sum_{k=0}^n \sum_{I} \alpha_{k,I,J} \Tr(x_{i_1} \cdots x_{i_k}) x_{j_1} \cdots x_{j_{n-k}} + h
\end{equation}
where $h \in \Id^{tr}(D_2^{t_{\alpha, \alpha}})$, $I = \{ i_1, \ldots, i_k \}$, $J = \{ j_1, \ldots, j_{n-k} \}$, $i_1 < \cdots < i_k$ and $j_1 < \cdots < j_{n-k}$.

Let $M$ be the largest $k$ such that $\alpha_{k,I,J} \neq 0$. There may exist several monomials in $f$ with that same $k$, we choose the one with the least monomial with respect to its trace part $x_{i_1} \cdots x_{i_k}$ (in the usual lexicographical order on the monomials in $x_1$, \dots, $x_n$ induced by $x_1<\cdots<x_n$). 

Now consider a monomial $g$ of degree $n' > n+M$ of the type
\[
g = \Tr(x_{l_1} \cdots x_{l_a}) x_{k_1} \cdots x_{k_{n'-a}}
\] 
with $a > 2M$ and $n'-a > n-M$. We split the monomial $x_{l_1} \cdots x_{l_a}$ inside the trace, in $M$ monomials $y_1 = x_{l_1} \cdots x_{l_{a_1}}$, \dots, $y_M = x_{l_{a_{M-1}+1}} \cdots x_{l_{a}}$, each one with $\lfloor \frac{a}{M} \rfloor$ or $\lceil \frac{a}{M} \rceil$ variables. We also let $y_{M+1} = x_{k_1}$, \dots, $y_{n'-a+M} = x_{k_{n'-a}}$.

Now, because of $f(y_1, \ldots, y_n) \equiv 0$, we can write $g \pmod{\Id^{tr}(A)}$  as a linear combination of monomials having either less than $M$ variables $y_i$ inside the trace, or $M$ variables $y_i$ inside the trace, but at least one of these variables is not among $y_1$, \dots, $y_M$. Passing back to $x_1$, \dots, $x_{n'}$ we see that $g$ is a linear combination of monomials with less than $a$ variables inside the trace. If $a > 2M$ is still satisfied for some of these monomials (for the new value of $a$) we repeat the procedure and so on.

Thus after several such steps we shall write $g$ as a linear combination of monomials with at most $2M$ variables inside the traces. It follows that, for $n$ large enough,
\[
c_n^{tr}(A) \leq \sum_{k = 0}^{2M} \binom{n}{k} \approx bn^{2M}
\]
where $b$ is a constant. Hence $\V^{tr}(A)$ has polynomial growth and the proof is complete.
\end{proof}

With a similar proof we obtain also the following result. 

\begin{Theorem} \label{D2 alfa 0 APG}
The algebras $D_2^{t_{\alpha, 0}}$, $\alpha \in F \setminus \{ 0 \}$, generate varieties of almost polynomial growth.
\end{Theorem}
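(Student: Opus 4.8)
The plan is to mirror the proof of Theorem~\ref{D2 alfa APG} almost verbatim, substituting the trace $t_{\alpha,\alpha}$ by $t_{\alpha,0}$ throughout, and to justify the very few points where the two arguments genuinely differ. First I would invoke Theorem~\ref{identities and codimensions of D2 t alpha 0}, which gives $c_n^{tr}(D_2^{t_{\alpha,0}})=2^n$, to conclude that $\V^{tr}(D_2^{t_{\alpha,0}})$ itself has exponential growth. Then I would take an arbitrary proper subvariety $\V^{tr}(A)\subsetneq \V^{tr}(D_2^{t_{\alpha,0}})$ and fix a multilinear trace polynomial $f\in MT_n$ that is a trace identity for $A$ but not for $D_2^{t_{\alpha,0}}$. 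The crucial structural input is that, by Theorem~\ref{identities and codimensions of D2 t alpha 0}, modulo $\Id^{tr}(D_2^{t_{\alpha,0}})$ every element of $MT_n$ is a linear combination of the monomials $\Tr(x_{i_1}\cdots x_{i_k})\,x_{j_1}\cdots x_{j_{n-k}}$ in \eqref{non identities of D2 t alpha 0}, which carry \emph{at most one} trace (indeed $f_2$ lets one collapse any product of traces into a single trace). This is the feature that makes the argument go through: in the $t_{\alpha,\alpha}$ case the basis \eqref{non identities of D2 with trace} already consists of single-trace monomials, so the reduction scheme transfers directly.

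The heart of the proof is the same substitution trick as in Theorem~\ref{D2 alfa APG}. Having written $f$ in the single-trace normal form, I would let $M$ be the largest $k$ for which some coefficient $\alpha_{k,I,J}$ is nonzero, breaking ties by taking the lexicographically least trace part, exactly as before. Given a long single-trace monomial $g=\Tr(x_{l_1}\cdots x_{l_a})\,x_{k_1}\cdots x_{k_{n'-a}}$ with $a>2M$, I would partition the $a$ variables inside the trace into $M$ blocks $y_1,\dots,y_M$ of nearly equal length and set the remaining variables outside the trace to be $y_{M+1},\dots$; substituting these $y_i$ into the relation $f\equiv 0$ (valid since $f\in\Id^{tr}(A)$) rewrites $g$ modulo $\Id^{tr}(A)$ as a combination of monomials with strictly fewer than $a$ variables inside the trace. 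Iterating drives every monomial down to at most $2M$ variables inside the trace, whence
\[
c_n^{tr}(A)\le \sum_{k=0}^{2M}\binom{n}{k}\sim b\,n^{2M}
\]
for a constant $b$, giving polynomial growth.

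One point I would check carefully, rather than copy blindly, is that the substitution $x_i\mapsto y_i$ into $f$ really produces a relation that \emph{isolates the top monomial of $g$}: because $f$ is not an identity of $D_2^{t_{\alpha,0}}$, its leading term (of trace-degree $M$, least lexicographic trace part) has a nonzero coefficient, and when the $y_i$ are the chosen blocks this leading term matches the trace structure of $g$ while every other term of $f$ yields either a shorter trace or a trace part that is not built from $y_1,\dots,y_M$ in the prescribed order. This is the step where the distinguished choice of $M$ (largest trace-degree, then least lexicographic) is used, and it is the only place where the specific shape of $\Id^{tr}(D_2^{t_{\alpha,0}})$ enters beyond the single-trace normal form. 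I expect this leading-term bookkeeping to be the main obstacle: everything else is a transcription of the earlier proof, but one must verify that the collapse of products of traces via $f_2$ does not interfere with the ordering used to identify the leading term. Once that is confirmed, the counting estimate and the conclusion follow as in Theorem~\ref{D2 alfa APG}, and the proof is complete.
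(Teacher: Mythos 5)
Your proposal is correct and coincides with the paper's treatment: the paper proves Theorem~\ref{D2 alfa 0 APG} simply by remarking that the argument of Theorem~\ref{D2 alfa APG} carries over, which is exactly the transcription you carry out (using Theorem~\ref{identities and codimensions of D2 t alpha 0} for the exponential lower bound and the single-trace normal form, then the block-substitution trick to bound the trace length in any proper subvariety). Your extra care about the leading-term bookkeeping is a sensible check but does not change the route.
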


We conclude this section by proving some results showing that the algebras $D_2^{t_{\alpha, \beta}}$, $D_2^{t_{\gamma, \gamma}}$ and $D_2^{t_{\delta, 0}}$ are not $T^{tr}$-equivalent. Recall that given two algebras with trace $A$ and $B$, $A$ is $T^{tr}$-equivalent to $B$ and we write $A \sim_{T^{tr}} B$, in case $\Id^{tr}(A) = \Id^{tr}(B)$. 

\begin{Lemma} \label{D2 delta 0 no T equ}
Let $\alpha$, $\beta, \gamma, \delta, \epsilon \in F \setminus \{0 \}$, $\alpha \neq \beta$, $\delta \neq \epsilon$. Then
\begin{itemize}
\item[1.]  $\Id^{tr}(D_2^{t_{\delta, 0}}) \not \subset \Id^{tr}(D_2^{t_{\alpha, \beta}})$.
\vspace{0.1 cm}
\item[2.] $ \Id^{tr}(D_2^{t_{\delta, 0}}) \not \subset  \Id^{tr}(D_2^{t_{\gamma,\gamma}})$.
\vspace{0.1 cm}
\item[3.]  $ \Id^{tr}(D_2^{t_{\delta, 0}}) \not \subset  \Id^{tr}(D_2^{t_{\epsilon, 0}})$.
\vspace{0.1 cm}
\end{itemize}
\end{Lemma}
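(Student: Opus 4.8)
The plan is to prove each of the three non-inclusions by exhibiting, in each case, an explicit multilinear trace polynomial that is a trace identity for $D_2^{t_{\delta, 0}}$ but fails on the target algebra. Since the defining identities of $D_2^{t_{\delta, 0}}$ are known explicitly from Theorem~\ref{identities and codimensions of D2 t alpha 0}, namely $f_1 = [x_1, x_2]$ and $f_2 = \Tr(x_1)\Tr(x_2) - \delta\,\Tr(x_1 x_2)$, the natural candidate in every case is $f_2$ itself (with $\alpha$ replaced by $\delta$). The strategy is therefore: take $f_2 \in \Id^{tr}(D_2^{t_{\delta, 0}})$ and show it does \emph{not} vanish on each of $D_2^{t_{\alpha, \beta}}$, $D_2^{t_{\gamma, \gamma}}$, and $D_2^{t_{\epsilon, 0}}$.

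For each target I would substitute the generic diagonal matrices $x_1 = \mbox{diag}(a_1, b_1)$ and $x_2 = \mbox{diag}(a_2, b_2)$ and evaluate $\Tr$ with the corresponding trace function. For part~1, on $D_2^{t_{\alpha, \beta}}$ one computes $\Tr(x_1)\Tr(x_2) = (\alpha a_1 + \beta b_1)(\alpha a_2 + \beta b_2)$ and $\Tr(x_1 x_2) = \alpha a_1 a_2 + \beta b_1 b_2$; the difference $f_2$ then has a nonzero coefficient (for instance the $a_1 b_2$ term has coefficient $\alpha\beta - \delta\cdot 0 = \alpha\beta \neq 0$ since $\alpha, \beta \neq 0$), so $f_2$ is not an identity for $D_2^{t_{\alpha, \beta}}$, giving the strict non-inclusion. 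Part~2 is analogous with $\alpha = \beta = \gamma$: this is exactly the computation already recorded in Remark~\ref{differentvarieties}(1), where evaluating $f_2$ on $D_2^{t_{\gamma, \gamma}}$ yields $\gamma^2(a_1 b_2 + a_2 b_1) \neq 0$, so I can simply invoke that remark.

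The only genuinely different case is part~3, the non-inclusion $\Id^{tr}(D_2^{t_{\delta, 0}}) \not\subset \Id^{tr}(D_2^{t_{\epsilon, 0}})$ with $\delta \neq \epsilon$ (both nonzero). Here $f_2$ with parameter $\delta$ must be shown not to vanish under the trace $t_{\epsilon, 0}$. Evaluating on the generic matrices gives $\Tr(x_1)\Tr(x_2) = \epsilon^2 a_1 a_2$ and $\Tr(x_1 x_2) = \epsilon\, a_1 a_2$, so $f_2 = (\epsilon^2 - \delta\epsilon)\, a_1 a_2 = \epsilon(\epsilon - \delta)\, a_1 a_2$ in the $(1,1)$ entry. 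Since $\epsilon \neq 0$ and $\delta \neq \epsilon$, this coefficient is nonzero, and hence $f_2$ is not a trace identity for $D_2^{t_{\epsilon, 0}}$, completing the third non-inclusion.

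None of these steps presents a real obstacle; the entire lemma reduces to three short evaluations of the single polynomial $f_2$ on generic $2 \times 2$ diagonal matrices. If anything, the mild subtlety is only bookkeeping: one must keep the roles of the parameters $\delta$ (from the source algebra, appearing inside $f_2$) and $\alpha, \beta, \gamma, \epsilon$ (from the target trace) clearly separated, and make sure the chosen monomial coefficient is manifestly nonzero using the stated hypotheses $\alpha \neq \beta$, $\delta \neq \epsilon$, and all parameters nonzero. I would present the three evaluations in sequence, reusing Remark~\ref{differentvarieties} for part~2 to avoid redundancy.
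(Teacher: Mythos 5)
Your proposal is correct and follows essentially the same route as the paper: both use the single witness polynomial $f_2 = \Tr(x_1)\Tr(x_2) - \delta\,\Tr(x_1x_2)$ from Theorem~\ref{identities and codimensions of D2 t alpha 0} and show it fails on each target algebra, the only cosmetic difference being that you evaluate on generic diagonal matrices and extract a nonzero monomial coefficient, whereas the paper substitutes the specific matrix units $x_1=e_{11}$, $x_2=e_{22}$ (parts 1 and 2) and $x_1=x_2=e_{11}$ (part 3), arriving at the same nonzero values $\alpha\beta$, $\gamma^2$, and $\epsilon(\epsilon-\delta)$.
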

\begin{proof}
Let us consider the polynomial 
\[
f_2 =  \Tr(x_1)\Tr(x_2) - \delta \Tr(x_1 x_2). 
\]
We have seen in Theorem \ref{identities and codimensions of D2 t alpha 0} that $f_2$ is a trace identity of $D_2^{t_{\delta, 0}}$. In order to complete the proof we need only to show that $f_2$ does not vanish on the algebras $D_2^{t_{\alpha, \beta}}$, $D_2^{t_{\gamma, \gamma}}$ and $D_2^{t_{\epsilon, 0}}$. By considering the evaluation $x_1 = e_{11}$ and $x_2 = e_{22}$, we obtain that $f_2(e_{11}, e_{22}, t_{\alpha, \beta}) = \alpha \beta (e_{11}+ e_{22}) \neq 0$ and $f_2(e_{11}, e_{22}, t_{\gamma, \gamma}) =\gamma^2 (e_{11} + e_{22}) \neq 0$. Hence $f_2$ is not a trace identity of $D_2^{t_{\alpha, \beta}}$ and $D_2^{t_{\gamma, \gamma}}$ and we are done in the first two cases. Finally, evaluating $x_1 = x_2 = e_{11}$, we get $f_2(e_{11}, e_{11}, t_{\epsilon, 0}) = \epsilon (\epsilon - \delta) (e_{11} + e_{22}) \neq 0$ and the proof is complete.
\end{proof}

\begin{Lemma} \label{D2 gamma gamma no T equ}
Let $\alpha$, $\beta, \gamma, \delta, \kappa \in F \setminus \{0 \}$, $\alpha \neq \beta$, $\gamma \neq \kappa$. Then
\begin{itemize}
\item[1.]  $\Id^{tr}(D_2^{t_{\gamma, \gamma}}) \not \subset \Id^{tr}(D_2^{t_{\alpha, \beta}})$.
\vspace{0.1 cm}
\item[2.] $ \Id^{tr}(D_2^{t_{\gamma, \gamma}}) \not \subset  \Id^{tr}(D_2^{t_{\kappa,\kappa}})$.
\vspace{0.1 cm}
\item[3.]  $ \Id^{tr}(D_2^{t_{\gamma, \gamma}}) \not \subset  \Id^{tr}(D_2^{t_{\delta, 0}})$.
\vspace{0.1 cm}
\end{itemize}
\end{Lemma}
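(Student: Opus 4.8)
The plan is to mirror the strategy of Lemma~\ref{D2 delta 0 no T equ}: exhibit a single trace polynomial that is guaranteed to lie in $\Id^{tr}(D_2^{t_{\gamma,\gamma}})$ and then, for each of the three items, produce an evaluation on the target algebra at which it fails to vanish. The natural candidate is the generator
\[
f_3 = \gamma^2 x_1 x_2 + \gamma^2 x_2 x_1 + \Tr(x_1)\Tr(x_2) - \gamma\Tr(x_1)x_2 - \gamma\Tr(x_2)x_1 - \gamma\Tr(x_1 x_2)
\]
furnished by Theorem~\ref{identities of D2 talpha alpha} (with $\alpha$ replaced by $\gamma$), which by that theorem belongs to $\Id^{tr}(D_2^{t_{\gamma,\gamma}})$. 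Thus for each item it suffices to verify that $f_3$ is \emph{not} a trace identity of the algebra appearing on the right-hand side.

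For items~1 and~2 I would evaluate $x_1 = e_{11}$ and $x_2 = e_{22}$. Since $e_{11}e_{22} = e_{22}e_{11} = 0$, the two quadratic terms and the term $\gamma\Tr(x_1 x_2)$ drop out, leaving $\Tr(e_{11})\Tr(e_{22})(e_{11}+e_{22}) - \gamma\Tr(e_{11})e_{22} - \gamma\Tr(e_{22})e_{11}$. On $D_2^{t_{\alpha,\beta}}$ one has $\Tr(e_{11}) = \alpha$ and $\Tr(e_{22}) = \beta$, so the value is the diagonal matrix with entries $\beta(\alpha-\gamma)$ and $\alpha(\beta-\gamma)$. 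On $D_2^{t_{\kappa,\kappa}}$ one has $\Tr(e_{11}) = \Tr(e_{22}) = \kappa$, so the value collapses to $\kappa(\kappa-\gamma)(e_{11}+e_{22})$.

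For item~3 the above evaluation is inconvenient, so instead I would take $x_1 = x_2 = e_{22}$. On $D_2^{t_{\delta,0}}$ the trace of $e_{22} = \mbox{diag}(0,1)$ is $0$, hence every term of $f_3$ containing a trace vanishes and only the two quadratic terms survive, leaving $2\gamma^2 e_{22}$, which is non-zero because $\gamma \neq 0$; this settles item~3. For item~2 the value $\kappa(\kappa-\gamma)(e_{11}+e_{22})$ is non-zero since $\kappa \neq 0$ and $\kappa \neq \gamma$.

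Each of these is a short direct substitution, so no serious obstacle is expected. The one place that genuinely uses a hypothesis is item~1: the two diagonal entries $\beta(\alpha-\gamma)$ and $\alpha(\beta-\gamma)$ could in principle both vanish only if $\alpha = \gamma$ and $\beta = \gamma$, forcing $\alpha = \beta$; the assumption $\alpha \neq \beta$ rules this out, so at least one entry is non-zero. I expect this small logical step to be the only point requiring any care, and I would state it explicitly to complete the argument.
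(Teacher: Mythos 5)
Your proposal is correct and follows essentially the same route as the paper: both use the generator $f_3$ of $\Id^{tr}(D_2^{t_{\gamma,\gamma}})$ and the evaluation $x_1=e_{11}$, $x_2=e_{22}$, which in the paper also handles item~3 directly (giving $-\gamma\delta e_{22}\neq 0$), so your alternative substitution $x_1=x_2=e_{22}$ there is a harmless variant. Your explicit observation that in item~1 both diagonal entries can vanish only if $\alpha=\gamma=\beta$ is a welcome bit of care that the paper leaves implicit.
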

\begin{proof}
Let us consider the polynomial 
$$
f_3 = \gamma^2 x_1x_2 + \gamma^2 x_2 x_1 +  \Tr(x_1)\Tr(x_2) - \gamma \Tr(x_1)x_2 - \gamma \Tr(x_2)x_1 - \gamma \Tr(x_1 x_2). 
$$
We have seen in Theorem \ref{identities of D2 talpha alpha} that $f_3$ is a trace identity of $D_2^{t_{\gamma, \gamma}}$. By considering the evaluation $x_1 = e_{11}$ and $x_2 = e_{22}$, we obtain that $f_3(e_{11}, e_{22}, t_{\alpha, \beta}) = \beta (\alpha - \gamma) e_{11} + \alpha (\beta - \gamma) e_{22} \neq 0$, $f_3(e_{11}, e_{22}, t_{\kappa, \kappa}) =\kappa(\kappa- \gamma) (e_{11} + e_{22}) \neq 0$ and $f_3(e_{11}, e_{22}, t_{\delta, 0}) = -\gamma \delta e_{22} \neq 0$.  Hence $f_3$ is not a trace identity of $D_2^{t_{\alpha, \beta}}$,  $D_2^{t_{\kappa, \kappa}}$ and $D_2^{t_{\delta, 0}}$ and the proof is complete.
\end{proof}

\begin{Lemma} \label{D2 alfa beta no T equ}
Let $\alpha$, $\beta, \gamma, \delta, \eta, \mu \in F \setminus \{0 \}$, $\alpha \neq \beta$, $\eta \neq \mu$, $\{ \alpha, \beta \} \neq \{ \eta, \mu \}$. Then
\begin{itemize}
\item[1.]  $\Id^{tr}(D_2^{t_{\alpha, \beta}}) \not \subset \Id^{tr}(D_2^{t_{\eta, \mu}})$.
\vspace{0.1 cm}
\item[2.] $\Id^{tr}(D_2^{t_{\alpha, \beta}}) \not \subset  \Id^{tr}(D_2^{t_{\gamma,\gamma}})$.
\vspace{0.1 cm}
\item[3.]  $\Id^{tr}(D_2^{t_{\alpha, \beta}}) \not \subset  \Id^{tr}(D_2^{t_{\delta, 0}})$.
\vspace{0.1 cm}
\end{itemize}
\end{Lemma}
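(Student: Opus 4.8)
The plan is to follow the template of Lemmas~\ref{D2 delta 0 no T equ} and~\ref{D2 gamma gamma no T equ}: for each of the three target algebras I would produce a trace polynomial lying in $\Id^{tr}(D_2^{t_{\alpha,\beta}})$ which does not vanish on the target. By Theorem~\ref{identities and codimensions of D2 talpha beta} the ideal $\Id^{tr}(D_2^{t_{\alpha,\beta}})$ is generated as a $T^{tr}$-ideal by $f_1=[x_1,x_2]$, $f_4$ and $f_5$. Since $f_1$ vanishes on every commutative algebra and all three targets are commutative, $f_1$ is useless as a separator, so the whole job must be done by $f_4$ and $f_5$.

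The key computational input I would record is the behaviour of $f_5$ on a primitive idempotent. Substituting $x_1=x_2=x_3=e_{11}$ and the formal trace $\Tr$ by $t_{p,q}$, so that $\Tr(e_{11})=p$, every mixed monomial of $f_5$ (one carrying an $x_i$ outside a trace) becomes a multiple of $e_{11}$ and hence contributes only to the $(1,1)$ entry, while the three pure-trace monomials are scalars. Reading off the $(2,2)$ entry therefore isolates the scalar part, which collapses to $p^3-(\alpha+\beta)p^2+\alpha\beta p=p(p-\alpha)(p-\beta)$; the symmetric substitution $x_1=x_2=x_3=e_{22}$ isolates $q(q-\alpha)(q-\beta)$ in the $(1,1)$ entry. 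For $f_4$ I would record analogously that $f_4(e_{11},e_{11},e_{22})$ has $(2,2)$ entry $p(2p-\alpha-\beta)$ when $q=p$ (the $t_{\gamma,\gamma}$ case) and $p(p-\alpha-\beta)$ when $q=0$ (the $t_{\delta,0}$ case).

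With these formulas the three parts follow by elementary case analysis. For part~1 (target $t_{\eta,\mu}$) the two idempotent substitutions give $\eta(\eta-\alpha)(\eta-\beta)$ and $\mu(\mu-\alpha)(\mu-\beta)$; since $\eta,\mu\neq 0$, the first vanishes only if $\eta\in\{\alpha,\beta\}$ and the second only if $\mu\in\{\alpha,\beta\}$, and were both to vanish then $\{\eta,\mu\}\subseteq\{\alpha,\beta\}$, which together with $\eta\neq\mu$ and $\alpha\neq\beta$ forces $\{\eta,\mu\}=\{\alpha,\beta\}$, against the hypothesis. Hence $f_5\notin\Id^{tr}(D_2^{t_{\eta,\mu}})$. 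For parts~2 and~3 I would first try $f_5$: on $t_{\gamma,\gamma}$ (resp.\ $t_{\delta,0}$) it yields $\gamma(\gamma-\alpha)(\gamma-\beta)$ (resp.\ $\delta(\delta-\alpha)(\delta-\beta)$), settling the case whenever $\gamma\notin\{\alpha,\beta\}$ (resp.\ $\delta\notin\{\alpha,\beta\}$). In the remaining degenerate values I would switch to $f_4$, whose recorded $(2,2)$ entries $\gamma(2\gamma-\alpha-\beta)$ and $\delta(\delta-\alpha-\beta)$ are nonzero unless $2\gamma=\alpha+\beta$, resp.\ $\delta=\alpha+\beta$; and $\gamma\in\{\alpha,\beta\}$ with $2\gamma=\alpha+\beta$ forces $\alpha=\beta$, while $\delta\in\{\alpha,\beta\}$ with $\delta=\alpha+\beta$ forces $\alpha=0$ or $\beta=0$, both excluded. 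So in every regime at least one of $f_4$, $f_5$ separates.

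The individual evaluations are routine; the only genuine obstacle is the structural observation that no single generator separates $D_2^{t_{\alpha,\beta}}$ from all the targets in every parameter regime. The natural candidate $f_5$ degenerates precisely when a trace coefficient of the target coincides with $\alpha$ or $\beta$, so the argument must pair $f_5$ with $f_4$ and dispatch the boundary values by the small case analysis above, the three exclusion hypotheses $\alpha\neq\beta$, $\eta\neq\mu$ and $\{\alpha,\beta\}\neq\{\eta,\mu\}$ being exactly what is needed to rule out a simultaneous degeneration.
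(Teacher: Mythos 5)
Your proof is correct and follows essentially the same route as the paper: both arguments evaluate the generators $f_4$ and $f_5$ of $\Id^{tr}(D_2^{t_{\alpha,\beta}})$ on matrix units of the target algebra and dispose of the degenerate parameter values by a short case analysis (the paper splits on whether $\alpha+\beta$ equals the target's trace sum, while you split on whether the target's trace value lies in $\{\alpha,\beta\}$, guided by the clean factorization $p(p-\alpha)(p-\beta)$ of the pure-trace part of $f_5$). The computations check out, so only these minor choices of substitution differ.
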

\begin{proof}
By Theorem \ref{identities and codimensions of D2 talpha beta} we know that the polynomials $f_4$ and $f_5$ are trace identities of $D_2^{t_{\alpha, \beta}}$. In order to complete the proof we shall show that such polynomials do not vanish on the algebras $D_2^{t_{\eta, \mu}}$, $D_2^{t_{\gamma, \gamma}}$ and $D_2^{t_{\delta, 0}}$. 
\begin{itemize}
\item[1.] 
We have to consider two different cases. If $\alpha + \beta \neq \eta + \mu$, then $f_4(e_{11}, e_{22}, e_{22}, t_{\eta, \mu}) = \mu (\alpha + \beta - \eta - \mu) e_{11} \neq 0$ and we are done in this case. Now, let us suppose that $\alpha + \beta = \eta + \mu$. In this case, for some $\lambda \in F$, we obtain that $f_5(e_{11}, e_{22}, e_{22}, t_{\eta, \mu}) = \lambda e_{11} + \eta (\beta - \mu) (\alpha - \mu) e_{22}$ is non-zero since the hypothesis $\{ \eta, \mu \} \neq \{ \alpha, \beta \}$ implies that $\beta \neq \mu$ and $\alpha \neq \mu$.
\vspace{0.1 cm}
\item[2.] 
It is the same proof of item $1.$ in which $\eta = \mu = \gamma$. 
\vspace{0.1 cm}
\item[3.] The evaluation $x_1 = x_2 = e_{22}$, $x_3 = e_{11}$ gives $f_5(e_{22}, e_{22}, e_{11}, t_{\delta, 0}) = \alpha \beta \delta e_{22} \neq 0$.
\end{itemize}
\end{proof}

\section{The algebras $C_2^{t_{\alpha, \beta}}$}

In this section we focus our attention on the $F$-algebra
$$
C_2 = 	\left \{ \begin{pmatrix} 
a & b \\
0 & a
\end{pmatrix} : a,b \in F \right \}.
$$

Since $C_2$ is commutative, every trace on $C_2$ is just a linear map $C_2 \rightarrow F$. Hence, if $\tr$ is a trace on $C_2$, then there exist $\alpha, \beta \in F$ such that
$$
\tr \left ( \begin{pmatrix} 
a & b \\
0 & a
\end{pmatrix} \right ) = \alpha a + \beta b.
$$
We denote such a trace  by $t_{\alpha, \beta}$. Moreover,  $C_2^{t_{\alpha, \beta}}$ indicates the algebra $C_2$ endowed with the trace $t_{\alpha, \beta}$.

\begin{Lemma} \label{identities of C2 t alfa 0}
Let $\alpha \in F$. Then $C_2^{t_{\alpha, 0}}$ satisfies the following trace identities of degree $2:$
\begin{itemize}
\item[1.] $[x_1, x_2] \equiv 0$.
\vspace{0.1 cm}
\item[2.] $\Tr(x_1) \Tr(x_2) - \alpha \Tr(x_1 x_2) \equiv 0$.
\vspace{0.1 cm}
\item[3.] $\Tr(x_1) \Tr(x_2) - \alpha \Tr(x_1) x_2 - \alpha \Tr(x_2) x_1 + \alpha^2 x_1 x_2 \equiv 0$.
\end{itemize}
\end{Lemma}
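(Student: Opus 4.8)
The plan is to verify each of the three trace identities directly, by evaluating both sides on arbitrary elements of $C_2^{t_{\alpha,0}}$. Since $C_2$ is commutative, identity (1) is immediate, and I would dispose of it in one line. For the remaining two, I would set up generic elements of $C_2$ once and for all: write $u_i = \begin{pmatrix} a_i & b_i \\ 0 & a_i \end{pmatrix}$ for $i=1,2$, so that $t_{\alpha,0}(u_i) = \alpha a_i$. The key computational observation is the product rule in $C_2$: for two such matrices one has
\[
u_1 u_2 = \begin{pmatrix} a_1 a_2 & a_1 b_2 + a_2 b_1 \\ 0 & a_1 a_2 \end{pmatrix},
\]
so that $t_{\alpha,0}(u_1 u_2) = \alpha a_1 a_2$, because the trace $t_{\alpha,0}$ reads off only the diagonal (i.e.\ the $a$) coordinate.

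With this in hand, identity (2) reduces to the scalar comparison $\Tr(x_1)\Tr(x_2) \mapsto (\alpha a_1)(\alpha a_2) = \alpha^2 a_1 a_2$ against $\alpha\,\Tr(x_1 x_2) \mapsto \alpha \cdot \alpha a_1 a_2 = \alpha^2 a_1 a_2$, so the difference vanishes. For identity (3), which is a genuinely \emph{mixed} trace polynomial (it has free variables outside the traces), I would evaluate term by term: $\Tr(x_1)\Tr(x_2) \mapsto \alpha^2 a_1 a_2\cdot 1$, while $\alpha\,\Tr(x_1)x_2 \mapsto \alpha^2 a_1 u_2$ and $\alpha\,\Tr(x_2)x_1 \mapsto \alpha^2 a_2 u_1$ and $\alpha^2 x_1 x_2 \mapsto \alpha^2 u_1 u_2$. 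Collecting these as a single matrix in $C_2$ and using the product rule above, the $(1,1)$-entry gives $\alpha^2(a_1 a_2 - a_1 a_2 - a_2 a_1 + a_1 a_2) = 0$ and the $(1,2)$-entry gives $\alpha^2\bigl(0 - a_1 b_2 - a_2 b_1 + (a_1 b_2 + a_2 b_1)\bigr) = 0$, so the whole expression is the zero matrix.

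Since these are entirely routine diagonal-versus-off-diagonal bookkeeping computations, there is no substantial obstacle; the only point requiring any care is keeping track of which terms contribute to the $(1,2)$-entry in identity (3), since the $\Tr(x_i)$ factors are scalars (hence affect only the coefficient) whereas the free $x_j$ factors carry the off-diagonal part $b_j$. I would therefore present the evaluation of (3) as an explicit $2\times 2$ matrix equation to make the cancellation transparent, and leave the (shorter) verifications of (1) and (2) as one-line remarks.
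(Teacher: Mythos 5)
Your proposal is correct and is precisely the direct verification that the paper's proof summarizes as ``The result follows by an immediate verification'': commutativity of $C_2$ gives (1), and the generic-element computation with $u_iu_j$ and $t_{\alpha,0}(u_i)=\alpha a_i$ gives (2) and (3). No difference in approach; your write-up merely makes the omitted bookkeeping explicit.
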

\begin{proof}
The result follows by an immediate verification.
\end{proof}

If $\alpha = 0$ then $C_2^{t_{0,0}}$ is a commutative algebra with zero trace and $c_n^{tr}(C_2^{t_{0,0}}) = 1$, for all $n \geq 1$. In case $\alpha \neq 0$, by putting together Lemma \ref{identities of C2 t alfa 0} and Theorem \ref{identities and codimensions of D2 t alpha 0} we get that $\V^{tr}(C_2^{t_{\alpha,0}}) \subsetneq \V^{tr}(D_2^{t_{\alpha,0}}) $. Hence, by Theorem \ref{D2 alfa 0 APG}, $C_2^{t_{\alpha,0}}$ generates a variety of polynomial growth. 

\begin{Remark} \label{C alpha, beta equivalent to C alpha beta'}
Let $\alpha, \beta, \beta' \in F$ with $\beta, \beta' \neq 0$. The algebras $C_2^{t_{\alpha, \beta}}$ and $C_2^{t_{\alpha, \beta'}}$ are isomorphic, as algebras with trace.
\end{Remark}
\begin{proof}
We need only to observe that the linear map $\varphi \colon C_2^{t_{\alpha, \beta}} \rightarrow C_2^{t_{\alpha, \beta'}}$, defined by 
$$
\varphi \left ( \begin{pmatrix} 
a & b \\
0 & a
\end{pmatrix} \right ) = \begin{pmatrix} 
a & \beta \beta'^{-1} b \\
0 & a
\end{pmatrix},
$$
is an isomorphism of algebras with trace.
\end{proof}

With a straightforward computation we get the following result. 

\begin{Lemma} \label{identity of C alfa}
Let $\alpha \in F$. Then:
\begin{itemize}
\item[1.] $C_2^{t_{\alpha,1}}$ does not satisfy any multilinear trace identity of degree $2$ which is not a consequence
of $[x_1, x_2] \equiv 0$. 
\vspace{0.1 cm}
\item[2.] $C_2^{t_{\alpha,1}}$ satisfies the following trace identity of degree $3:$
$$
f_\alpha = \alpha x_1 x_2 x_3 + \Tr(x_1 x_2) x_3 + \Tr(x_1 x_3) x_2 + \Tr(x_2 x_3) x_1 - \Tr(x_1) x_2 x_3 - \Tr(x_2) x_1 x_3 - \Tr(x_3) x_1 x_2 - \Tr(x_1 x_2 x_3).
$$ 
\end{itemize}
\end{Lemma}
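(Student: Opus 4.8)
The plan is to prove both assertions by direct evaluation, exploiting that $C_2$ has the $F$-basis $\{u,v\}$ with $u = e_{11}+e_{22}$ the unit and $v = e_{12}$ nilpotent, so that $v^2 = 0$, $uv = vu = v$, while $t_{\alpha,1}(u) = \alpha$ and $t_{\alpha,1}(v) = 1$. A generic element is then $X = \begin{pmatrix} a & b \\ 0 & a \end{pmatrix} = au + bv$ with $a,b$ scalars. Both parts reduce to finite, elementary computations on such elements.

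For part 1, I would first record a basis of $MT_2$: since $\dim_F MT_2 = 3! = 6$, a basis is given by $x_1 x_2$, $x_2 x_1$, $\Tr(x_1) x_2$, $\Tr(x_2) x_1$, $\Tr(x_1 x_2)$, $\Tr(x_1)\Tr(x_2)$. As $C_2$ is commutative, $[x_1,x_2]$ is a trace identity, and the space of its multilinear degree-$2$ consequences is one-dimensional (spanned by $x_1 x_2 - x_2 x_1$). Hence modulo these consequences we are left with the five monomials $x_1 x_2$, $\Tr(x_1) x_2$, $\Tr(x_2) x_1$, $\Tr(x_1 x_2)$, $\Tr(x_1)\Tr(x_2)$, and the core step is to show that these are linearly independent modulo $\Id^{tr}(C_2^{t_{\alpha,1}})$. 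The plan is to evaluate them on generic matrices $X_1, X_2$ with $a_1,a_2,b_1,b_2$ independent commuting variables and compare the entries of the resulting $2\times 2$ matrices. The two pure-trace terms are scalar multiples of the identity, so the off-diagonal entry isolates the first three monomials; setting the coefficients of $a_1 b_2$, $a_2 b_1$ and $b_1 b_2$ to zero forces the coefficients of those three to vanish, and the diagonal entry then forces the two remaining coefficients to vanish. Both entries are needed, and the argument is uniform in $\alpha$ (the case $\alpha = 0$ is not exceptional). This yields $c_2^{tr}(C_2^{t_{\alpha,1}}) = 5 = 6-1$, so that $\Id^{tr}(C_2^{t_{\alpha,1}}) \cap MT_2$ is exactly the space of consequences of $[x_1,x_2]$, which is the assertion.

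For part 2, I observe that $f_\alpha$ is multilinear of degree $3$ and symmetric in $x_1, x_2, x_3$, so it is a trace identity precisely when it vanishes on all substitutions of the three variables by the basis elements $u, v$; by symmetry, only the number of arguments equal to $v$ is relevant. Using $v^2 = 0$, $t_{\alpha,1}(u)=\alpha$ and $t_{\alpha,1}(v)=1$, I would check the four cases (zero, one, two, three of the arguments equal to $v$) and verify that in each the $u$- and $v$-components cancel, giving $0$.

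The computations are elementary throughout. The only genuinely substantive point is the codimension count in part 1, where the linear independence of the five evaluated monomials must be verified carefully, in particular confirming that $\alpha = 0$ behaves like the generic case. Part 2 is a finite routine check once one reduces to substitutions from $\{u,v\}$ and exploits the symmetry of $f_\alpha$.
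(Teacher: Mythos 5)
Your proposal is correct and takes the same route as the paper, which in fact omits the proof entirely (``with a straightforward computation''); your direct evaluation on the basis $u=e_{11}+e_{22}$, $v=e_{12}$ supplies exactly the missing verification, and part 2 is a routine check once you note that $f_\alpha$ evaluates symmetrically on a commutative algebra so only the number of arguments equal to $v$ matters. One intermediate claim in part 1 needs to be weakened before you write it out: the off-diagonal entry alone does \emph{not} force the coefficients of $x_1x_2$, $\Tr(x_1)x_2$, $\Tr(x_2)x_1$ to vanish when $\alpha=0$. The $3\times 3$ system in those coefficients coming from the monomials $a_1b_2$, $a_2b_1$, $b_1b_2$ of the $e_{12}$-entry has determinant $-2\alpha$, so at $\alpha=0$ it only yields $c_1=0$ and $c_2+c_3=0$. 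The diagonal entry then gives $c_5=0$ (coefficient of $b_1b_2$) and $c_2=c_3=-c_4$ (coefficients of $a_2b_1$ and $a_1b_2$), and combining the two entries kills all five coefficients for every $\alpha$, including $\alpha=0$. So your conclusion, and your remark that both entries are needed and that $\alpha=0$ is not exceptional, are right; only the assertion that the off-diagonal entry by itself handles the first three monomials should be corrected.
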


Next we shall prove that, for any $\alpha \in F$, the algebra $C_2^{t_{\alpha,1}}$ generates a variety of exponential growth. 

\begin{Theorem} \label{C alfa has exp growth}
For any $\alpha \in F$, the algebra $C_2^{t_{\alpha,1}}$ generates a variety of exponential growth. 
\end{Theorem}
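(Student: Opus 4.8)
The plan is to establish the lower bound $c_n^{tr}(C_2^{t_{\alpha,1}}) \ge 2^n$, which already rules out polynomial growth and hence gives exponential growth (recall every variety of trace algebras grows either polynomially or exponentially). By Remark~\ref{C alpha, beta equivalent to C alpha beta'} we work with $\beta = 1$. Since $F$ is infinite (being of characteristic $0$), a multilinear trace polynomial is a trace identity of $C_2^{t_{\alpha,1}}$ if and only if it vanishes under the \emph{generic} substitution $x_i \mapsto X_i := a_i(e_{11}+e_{22}) + b_i e_{12}$, with $\Tr \mapsto t_{\alpha,1}$, where the $a_i,b_i$ are $2n$ commuting indeterminates. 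Thus it suffices to produce $2^n$ multilinear trace monomials whose generic values are linearly independent in $C_2 \otimes F[a_1,b_1,\ldots,a_n,b_n]$.

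First I would record the elementary arithmetic of $C_2 \cong F[t]/(t^2)$ (dual numbers, $t=e_{12}$): for each $i$ one has $\Tr(X_i)=\alpha a_i + b_i \in F[a,b]$, while any product $\prod_{j\in S} X_j$ has scalar part (the coefficient of $e_{11}+e_{22}$) equal to $\prod_{j\in S} a_j$. For a subset $T\subseteq\{1,\ldots,n\}$ I set
\[
M_T \;=\; \Bigl(\prod_{i\in T}\Tr(x_i)\Bigr)\,\prod_{j\notin T} x_j \;\in\; MT_n,
\]
obtaining $2^n$ distinct multilinear trace monomials. Projecting the generic value of $M_T$ onto its scalar component gives
\[
\Bigl(\prod_{i\in T}(\alpha a_i + b_i)\Bigr)\prod_{j\notin T} a_j \;\in\; F[a,b],
\]
and the whole argument reduces to showing these $2^n$ polynomials are linearly independent over $F$.

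To see this I would apply the invertible linear change of indeterminates $b_i \mapsto b_i' := \alpha a_i + b_i$ (invertible for every $\alpha$, with inverse $b_i' \mapsto b_i'-\alpha a_i$). In the new coordinates the scalar part of $M_T$ becomes the squarefree monomial $\prod_{i\in T} b_i' \prod_{j\notin T} a_j$, and distinct subsets $T$ give distinct monomials; hence the scalar parts are linearly independent. Consequently the $M_T$ are linearly independent modulo $\Id^{tr}(C_2^{t_{\alpha,1}})$, so $c_n^{tr}(C_2^{t_{\alpha,1}})\ge 2^n$ and the variety grows exponentially.

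The only genuine subtlety — and the step I expect to be the main obstacle if one argues naively — is the cross term $\alpha a_i$ hidden in $\Tr(x_i)$. Without the change of variables, the monomial $\prod_{i\in T} b_i \prod_{j\notin T} a_j$ receives contributions from \emph{every} $M_{T'}$ with $T'\supseteq T$, each weighted by a power of $\alpha$, so independence is not immediate and would otherwise have to be extracted by a downward induction on $|T|$ exploiting the triangularity of this coefficient system. The substitution $b_i' = \alpha a_i + b_i$ is precisely what trivializes that triangularity and makes the monomials manifestly distinct, uniformly in $\alpha$ (including $\alpha=0$). Conceptually it is worth stressing that, although $C_2$ is a very small commutative (local) algebra, the trace $t_{\alpha,1}$ resolves the two basis directions $e_{11}+e_{22}$ and $e_{12}$, and it is exactly this extra separating power of the trace that manufactures the $2^n$ independent configurations.
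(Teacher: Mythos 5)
Your proof is correct and follows essentially the same route as the paper: both exhibit the same $2^n$ monomials $\prod_{i\in T}\Tr(x_i)\prod_{j\notin T}x_j$ and show they are linearly independent modulo $\Id^{tr}(C_2^{t_{\alpha,1}})$, which gives $c_n^{tr}(C_2^{t_{\alpha,1}})\ge 2^n$. The only difference is the verification of independence: the paper evaluates the traced variables at $e_{12}$ and the remaining ones at $e_{11}+e_{22}$ and argues via maximality of $|T|$ (exactly the triangular downward induction you mention as the alternative), whereas you use generic elements and the change of variables $b_i'=\alpha a_i+b_i$ to make the independence manifest; both are valid.
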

\begin{proof}
Let us consider the following set of trace monomials of degree $n$:
\begin{equation} \label{monomials}
\Tr(x_{i_1}) \cdots \Tr(x_{i_k}) x_{j_1} \cdots x_{j_{n-k}},
\end{equation}
where $\{ i_1, \ldots, i_k, j_1, \ldots, j_{n-k} \} = \{ 1, 	\ldots, n \}$, $i_1 < \cdots < i_k$, $j_1 < \cdots < j_{n-k}$, $k = 0, \ldots, n$.

The number of elements in \eqref{monomials} is exactly $
\sum_{k = 0}^n \binom{n}{k} = 2^n$. So, in order to prove the theorem we shall show that the monomials in \eqref{monomials} are linearly independent, modulo $\Id^{tr}(C_2^{t_{\alpha,1}})$. To this end, let $g \in \Id^{tr}(C_2^{t_{\alpha,1}})$ be a linear combination of the above elements:
$$
g(x_1, \ldots, x_n) = \sum_{I,J} a_{I,J} \Tr(x_{i_1}) \cdots \Tr(x_{i_k}) x_{j_1} \cdots x_{j_{n-k}},
$$
where $k= 0, \ldots, n$, $I = \{ x_{i_1}, \ldots, x_{i_k} \}$, $J = \{ x_{j_1}, \ldots, x_{j_{n-k}} \}$ and $i_1 < \cdots < i_k, \ j_1 < \cdots < j_{n-k}$. 

We claim that $g$ is actually the zero polynomial. Let $k$ be the largest integer such that $\alpha_{I,J} \neq 0$, with fixed $I = \{ x_{i_1}, \ldots, x_{i_k} \}$ and $J = \{ x_{j_1}, \ldots, x_{j_{n-k}} \}$. By making the evaluation $x_{i_1} = \cdots = x_{i_k} = e_{12}$ and $x_{j_1} = \cdots = x_{j_{n-k}} = e_{11} + e_{22}$, we get $g = \alpha_{I,J} (e_{11} + e_{22}) + \gamma e_{12} = 0$. This implies $ \alpha_{I,J} = 0$, a contradiction.  
\end{proof}

We conclude this section with the following results comparing trace $T$-ideals.

\begin{Lemma} \label{C alfa no T equ C beta}
Let $\alpha, \beta \in F$ be two distinct elements. Then $\Id^{tr}(C_2^{t_{\alpha,1}}) \not \subset \Id^{tr}(C_2^{t_{\beta,1}}) $.
\end{Lemma}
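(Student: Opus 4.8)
The plan is to exhibit a single trace polynomial that is an identity for $C_2^{t_{\alpha,1}}$ yet fails on $C_2^{t_{\beta,1}}$, in the same spirit as Lemmas \ref{D2 delta 0 no T equ}--\ref{D2 alfa beta no T equ}. The obvious candidate is the degree-$3$ polynomial $f_\alpha$ constructed in Lemma \ref{identity of C alfa}, which by part $2$ of that lemma lies in $\Id^{tr}(C_2^{t_{\alpha,1}})$. It therefore remains only to verify that $f_\alpha \notin \Id^{tr}(C_2^{t_{\beta,1}})$.

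The key observation is that the polynomials $f_\alpha$ and $f_\beta$ coincide in every term except the coefficient of $x_1 x_2 x_3$; reading off the defining formula one gets
\[
f_\alpha = f_\beta + (\alpha - \beta)\, x_1 x_2 x_3.
\]
Since Lemma \ref{identity of C alfa} applied with $\beta$ in place of $\alpha$ shows $f_\beta \in \Id^{tr}(C_2^{t_{\beta,1}})$, every evaluation of $f_\alpha$ on $C_2^{t_{\beta,1}}$ collapses to the evaluation of $(\alpha-\beta)\, x_1 x_2 x_3$. Thus it is enough to choose elements of $C_2$ whose product does not vanish.

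I would substitute $x_1 = x_2 = x_3 = e_{11}+e_{22}$, the unit of $C_2$, together with the trace $t_{\beta,1}$. A direct computation (using $\Tr(e_{11}+e_{22}) = \beta$ under $t_{\beta,1}$, so that the seven trace-bearing terms of $f_\beta$ cancel) yields
\[
f_\alpha(e_{11}+e_{22},\, e_{11}+e_{22},\, e_{11}+e_{22},\, t_{\beta,1}) = (\alpha-\beta)(e_{11}+e_{22}),
\]
which is non-zero precisely because $\alpha \neq \beta$. Hence $f_\alpha$ is a trace identity of $C_2^{t_{\alpha,1}}$ but not of $C_2^{t_{\beta,1}}$, giving $\Id^{tr}(C_2^{t_{\alpha,1}}) \not\subset \Id^{tr}(C_2^{t_{\beta,1}})$. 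There is no real obstacle in this argument: the only point demanding any care is confirming the identity $f_\alpha - f_\beta = (\alpha-\beta)\,x_1x_2x_3$, which is immediate from the explicit expression, and checking the cancellation under the unit substitution.
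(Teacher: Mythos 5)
Your proposal is correct and follows essentially the same route as the paper: both use the polynomial $f_\alpha$ from Lemma \ref{identity of C alfa} and the evaluation $x_1=x_2=x_3=e_{11}+e_{22}$ with the trace $t_{\beta,1}$, obtaining $(\alpha-\beta)(e_{11}+e_{22})\neq 0$. The extra observation that $f_\alpha-f_\beta=(\alpha-\beta)x_1x_2x_3$ is a clean way to organize the computation but does not change the argument.
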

\begin{proof}
Let us consider the polynomial 
$$
f_\alpha = \alpha x_1 x_2 x_3 + \Tr(x_1 x_2) x_3 + \Tr(x_1 x_3) x_2 + \Tr(x_2 x_3) x_1 - \Tr(x_1) x_2 x_3 - \Tr(x_2) x_1 x_3 - \Tr(x_3) x_1 x_2 - \Tr(x_1 x_2 x_3).
$$ 
We have seen in Lemma \ref{identity of C alfa} that $f_\alpha$ is a trace identity of $C_2^{t_{\alpha,1}}$. In order to complete the proof we need only show that such a polynomial does not vanish on $C_2^{t_{\beta,1}}$. By considering the evaluation $x_1 = x_2 = x_3 = e_{11} + e_{22} \in C_2^{t_{\beta,1}} $, we get 
\[
f_\alpha(e_{11} + e_{22}, e_{11} + e_{22}, e_{11} + e_{22}, t_{\beta,1}) = (\alpha - \beta) (e_{11} + e_{22}). 
\]
Since $\alpha \neq \beta $, $f_\alpha$ does not vanish on $C_2^{t_{\beta,1}}$ and we are done.
\end{proof}

\begin{Lemma} \label{D2 alpha 0, D2 alfa alfa no T equ C beta}
Let $\alpha, \beta, \gamma, \delta \in F \setminus \{0 \}$, $\epsilon \in F$, $\alpha \neq \beta$. Then 
\begin{itemize}
\item[1.] $\Id^{tr}(D_2^{t_{\delta,0}}) \not \subset \Id^{tr}(C_2^{t_{\epsilon,1}}) $,
\vspace{0.1 cm}
\item[2.] $\Id^{tr}(D_2^{t_{\gamma,\gamma}}) \not \subset \Id^{tr}(C_2^{t_{\epsilon,1}}) $,
\vspace{0.1 cm}
\item[3.] $\Id^{tr}(D_2^{t_{\alpha,\beta}}) \not \subset \Id^{tr}(C_2^{t_{\epsilon,1}}) $.
\end{itemize}
\end{Lemma}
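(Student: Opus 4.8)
The statement to prove is Lemma~\ref{D2 alpha 0, D2 alfa alfa no T equ C beta}, asserting three non-inclusions of trace $T$-ideals: that $\Id^{tr}(D_2^{t_{\delta,0}})$, $\Id^{tr}(D_2^{t_{\gamma,\gamma}})$ and $\Id^{tr}(D_2^{t_{\alpha,\beta}})$ are each \emph{not} contained in $\Id^{tr}(C_2^{t_{\epsilon,1}})$. The strategy mirrors exactly that of Lemmas~\ref{D2 delta 0 no T equ}, \ref{D2 gamma gamma no T equ} and \ref{D2 alfa beta no T equ}: for each of the three source algebras I would exhibit a single explicit trace polynomial that is known to be an identity for that algebra but produces a non-zero value under a well-chosen substitution into $C_2^{t_{\epsilon,1}}$. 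No representation theory or codimension estimate is needed; the whole lemma reduces to three short evaluations.

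For item~1 I would take the polynomial $f_2 = \Tr(x_1)\Tr(x_2) - \delta\,\Tr(x_1 x_2)$, which by Theorem~\ref{identities and codimensions of D2 t alpha 0} lies in $\Id^{tr}(D_2^{t_{\delta,0}})$, and evaluate it on $C_2^{t_{\epsilon,1}}$. Here the key point is that on $C_2$ the trace $t_{\epsilon,1}$ reads off the \emph{off-diagonal} entry with coefficient~$1$, so choosing arguments whose product has a non-trivial $(1,2)$-entry will make $\Tr(x_1 x_2)$ behave differently from $\Tr(x_1)\Tr(x_2)$; a substitution such as $x_1 = x_2 = e_{11}+e_{22}$ (or a mix involving $e_{12}$) should separate the two terms and leave a non-zero multiple of $\delta$, which is non-zero by hypothesis. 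For item~2 I would analogously use $f_3$ from Theorem~\ref{identities of D2 talpha alpha} (the $D_2^{t_{\gamma,\gamma}}$-identity with $\gamma$ in place of $\alpha$) and evaluate on $C_2^{t_{\epsilon,1}}$; since $f_3$ mixes ordinary products, single traces and the double trace, a substitution like $x_1 = x_2 = e_{11}+e_{22}$ will collapse most terms and leave a residual whose surviving coefficient is a non-zero polynomial in $\gamma$ and $\epsilon$. For item~3 I would invoke $f_4$ or $f_5$ from Theorem~\ref{identities and codimensions of D2 talpha beta}, both identities of $D_2^{t_{\alpha,\beta}}$, and exhibit an evaluation into $C_2^{t_{\epsilon,1}}$ where the $(1,2)$-sensitivity of the trace again forces a non-zero outcome.

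The only genuine care required is to make sure the chosen evaluations land in $C_2^{t_{\epsilon,1}}$ rather than $D_2$, i.e.\ that every matrix used lies in $\spa_F\{e_{11}+e_{22},\,e_{12}\}$; the matrix units $e_{11}$ and $e_{22}$ separately are \emph{not} available, so unlike the $D_2$-lemmas I must build substitutions out of $e_{11}+e_{22}$ and $e_{12}$ only. Consequently I expect the main (though still routine) obstacle to be selecting, for each of $f_2$, $f_3$, and $f_4/f_5$, an evaluation from this restricted two-dimensional pool that does not accidentally annihilate the polynomial; because $e_{12}^2 = 0$ and $(e_{11}+e_{22})$ acts as the identity, the product structure is rigid, so I would verify each candidate substitution gives a residual of the form $(\text{non-zero scalar in the relevant parameters})\,(e_{11}+e_{22})$ or a non-zero $e_{12}$-component. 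Once the three evaluations are written out and checked to be non-zero under the standing hypotheses $\delta,\gamma,\alpha,\beta\neq 0$ and $\alpha\neq\beta$, each of the three non-inclusions follows immediately, completing the proof.
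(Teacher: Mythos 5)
Your overall strategy (exhibit an identity of each $D_2$-algebra and a substitution into $C_2^{t_{\epsilon,1}}$ on which it does not vanish) is viable, and for item~3 it coincides with the paper, which evaluates $f_5$ at $x_1=x_2=x_3=e_{12}$ to get $e_{11}+e_{22}-(\alpha+\beta)e_{12}\neq 0$ (note that $f_4$ is a poor choice here: it vanishes identically at $x_1=x_2=x_3=e_{12}$, and at $x_1=e_{11}+e_{22}$, $x_2=x_3=e_{12}$ it gives $(\epsilon-\alpha-\beta)e_{12}$, which can be zero). For items~1 and~2, however, the paper takes a different and cleaner route: it invokes item~1 of Lemma~\ref{identity of C alfa}, which says that $C_2^{t_{\epsilon,1}}$ satisfies \emph{no} multilinear trace identity of degree $2$ beyond consequences of $[x_1,x_2]$, while $f_2$ and $f_3$ are degree-$2$ identities of $D_2^{t_{\delta,0}}$ and $D_2^{t_{\gamma,\gamma}}$ not of that form; no evaluation is needed at all. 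Your evaluation-based route buys self-containedness at the cost of case-checking, whereas the paper's degree argument disposes of both items at once.

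There is one concrete pitfall in your plan that you should fix before calling the argument complete: the substitution you name first, $x_1=x_2=e_{11}+e_{22}$, does \emph{not} work uniformly. Since $t_{\epsilon,1}(e_{11}+e_{22})=\epsilon$, it gives $f_2=\epsilon(\epsilon-\delta)$, which vanishes when $\epsilon=0$ or $\epsilon=\delta$ (both permitted, as $\epsilon$ ranges over all of $F$), and $f_3=(2\gamma-\epsilon)(\gamma-\epsilon)$, which vanishes when $\epsilon=\gamma$ or $\epsilon=2\gamma$. The uniformly working choice is $x_1=x_2=e_{12}$: then $f_2=\Tr(e_{12})^2-\delta\Tr(e_{12}^2)=1\neq 0$ and $f_3=(e_{11}+e_{22})-2\gamma e_{12}\neq 0$ for every $\epsilon\in F$. (Your remark that the residue is ``a non-zero multiple of $\delta$'' is also off for this working substitution; it is simply $1$.) With these substitutions pinned down, your proof is correct.
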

\begin{proof}
By Theorems \ref{identities and codimensions of D2 t alpha 0} and \ref{identities of D2 talpha alpha}, we know that the algebras $D_2^{t_{\delta,0}}$ and $D_2^{t_{\gamma,\gamma}}$ satisfy trace identities of degree $2$ which are not a consequence of $[x_1, x_2] \equiv 0$. This does not happen for the algebra $C_2^{t_{\epsilon,1}}$ (see the first item of Lemma \ref{identity of C alfa}) and so the proof of the first two items is complete. 

In order to prove the last item, let us consider the polynomial $f_5$ of Theorem \ref{identities and codimensions of D2 talpha beta}, which is a trace identity of $D_2^{t_{\alpha, \beta}}$.  Such a polynomial does not vanish on $C_2^{t_{\epsilon,1}}$. In fact, by considering the evaluation $x_1 = x_2 = x_3 = e_{12} \in C_2^{t_{\epsilon,1}} $, we get 
\[
f_5(e_{12}, e_{12}, e_{12}, t_{\epsilon,1}) = e_{11} + e_{22} - (\alpha + \beta)e_{12} \neq 0. 
\]
\end{proof}

\begin{Lemma} \label{C alfa no equ D2 beta gamma}
Let $\alpha, \beta, \gamma \in F$, $\alpha \neq 0$. Then $\Id^{tr}(C_2^{t_{\gamma,1}}) \not \subset \Id^{tr}(D_2^{t_{\alpha, \beta}}) $.
\end{Lemma}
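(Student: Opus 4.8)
The plan is to exhibit a single trace polynomial that lies in $\Id^{tr}(C_2^{t_{\gamma,1}})$ but fails to be an identity for $D_2^{t_{\alpha,\beta}}$, exactly in the spirit of Lemmas \ref{C alfa no T equ C beta} and \ref{D2 alpha 0, D2 alfa alfa no T equ C beta}. The obvious candidate is the degree-$3$ polynomial $f_\gamma$ furnished by Lemma \ref{identity of C alfa}, since we already know it is a trace identity of $C_2^{t_{\gamma,1}}$; it then remains only to check that it does not vanish on $D_2^{t_{\alpha,\beta}}$.

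First I would recall from Lemma \ref{identity of C alfa} that
\[
f_\gamma = \gamma x_1 x_2 x_3 + \Tr(x_1 x_2) x_3 + \Tr(x_1 x_3) x_2 + \Tr(x_2 x_3) x_1 - \Tr(x_1) x_2 x_3 - \Tr(x_2) x_1 x_3 - \Tr(x_3) x_1 x_2 - \Tr(x_1 x_2 x_3)
\]
belongs to $\Id^{tr}(C_2^{t_{\gamma,1}})$. Then I would evaluate $f_\gamma$ on $D_2^{t_{\alpha,\beta}}$ at the convenient point $x_1 = x_2 = x_3 = e_{11}$. Since $e_{11}$ is idempotent, every monomial evaluates to $e_{11}$, and each trace reads $\Tr(e_{11}) = t_{\alpha,\beta}(e_{11}) = \alpha$. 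The three ``positive'' trace terms then contribute $3\alpha e_{11}$, the three ``negative'' trace terms contribute $-3\alpha e_{11}$, and the last term $-\Tr(x_1 x_2 x_3)$ contributes $-\alpha(e_{11}+e_{22})$; collecting everything yields
\[
f_\gamma(e_{11}, e_{11}, e_{11}, t_{\alpha,\beta}) = (\gamma - \alpha) e_{11} - \alpha e_{22}.
\]

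Because $\alpha \neq 0$ by hypothesis, the $(2,2)$-entry $-\alpha$ is nonzero, so the value above is a nonzero matrix regardless of $\gamma$. Hence $f_\gamma \notin \Id^{tr}(D_2^{t_{\alpha,\beta}})$, while $f_\gamma \in \Id^{tr}(C_2^{t_{\gamma,1}})$, which gives the strict non-inclusion. I do not expect a genuine obstacle here: the only mild subtlety is choosing a substitution whose $(2,2)$-component survives, and the idempotent $e_{11}$ accomplishes this at once thanks to the assumption $\alpha \neq 0$. Had one instead tried $x_1 = x_2 = x_3 = e_{11}+e_{22}$, the value would have been $(\gamma-\alpha)(e_{11}+e_{22})$, which degenerates precisely when $\gamma = \alpha$; so the cleaner choice $e_{11}$ is preferable, and it sidesteps any case distinction on $\gamma$.
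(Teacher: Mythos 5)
Your proposal is correct and follows essentially the same route as the paper: both use the polynomial $f_\gamma$ from Lemma \ref{identity of C alfa} as the witness and merely differ in the evaluation point (the paper takes $x_1=x_2=e_{11}$, $x_3=e_{22}$, obtaining $\alpha e_{22}-\beta e_{11}\neq 0$, while you take $x_1=x_2=x_3=e_{11}$, obtaining $(\gamma-\alpha)e_{11}-\alpha e_{22}\neq 0$). Your computation checks out, and in both cases the hypothesis $\alpha\neq 0$ is exactly what makes the resulting matrix nonzero.
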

\begin{proof}
Let us consider the polynomial $f_\gamma$ of Lemma \ref{identity of C alfa}, which is a trace identity of $C_2^{t_{\gamma,1}}$. We shall show that it does not vanish on $D_2^{t_{\alpha, \beta}}$. By considering the evaluation $x_1 = x_2 = e_{11}$ and $x_3 = e_{22}$, we get
\[
f_\gamma(e_{11}, e_{11}, e_{22}, t_{\alpha, \beta}) = \alpha e_{22} - \beta e_{11}. 
\]
Since $\alpha \neq 0$, $f_\gamma$ does not vanish on $D_2^{t_{\alpha, \beta}}$ and we are done.

In particular, in case $\beta = \alpha$ we get that  $\Id^{tr}(C_2^{t_{\gamma,1}}) \not \subset \Id^{tr}(D_2^{t_{\alpha, \alpha}}) $ and in case $\beta = 0$ $\Id^{tr}(C_2^{t_{\gamma,1}}) \not \subset \Id^{tr}(D_2^{t_{\alpha, 0}}) $. 
\end{proof}

\section{Algebras with trace of polynomial growth}

We start this section by describing a version of the Wedderburn-Malcev theorem for finite dimensional algebras with trace.
First we recall some definitions. Let $A$ be a unitary algebra with trace $\tr$. A subset (subalgebra,  ideal) $ S \subseteq A$  is a trace-subset (subalgebra, ideal) of $A$ if it is stable under the trace; in other words for all $ s \in S $, one has $ \tr(s) \in S $. 

\begin{Definition}
Let $A$ be an algebra with trace. $A$ is called a trace-simple algebra if 
\begin{enumerate}
\item[1.] $A^2 \neq 0$,
\vspace{0.1 cm}
\item[2.] $A$ has no non-trivial trace-ideals.
\end{enumerate}
\end{Definition}

\begin{Remark} \label{simple implies trace simple}
Let $A$ be an algebra with trace $\tr$. 
\begin{enumerate}
\item[1.] If $A$ is simple (as an algebra) then $A$ is trace-simple.
\vspace{0.1 cm}
\item[2.] If $I$ is a proper trace-ideal of $A$ then the trace vanishes on $I$.
\end{enumerate}
\end{Remark}
\begin{proof}
The first item is obvious. For the second one, let us suppose that there exists $a\in I$ such that $tr(a)=\alpha\ne 0$. Hence $\alpha \in F$ is invertible. Moreover, since $I$ is a trace-ideal, it contains $\alpha$ and so we would have $I=A$, a contradiction.

Notice that the second item of the remark also holds for one-sided ideals. 
\end{proof}

In the following result we give a version of the Wedderburn--Malcev theorem for finite dimensional algebras with trace. 

\begin{Theorem}\label{WM}
Let $A$ be a finite dimensional unitary algebra with trace $\tr$ over an algebraically closed field $F$ of characteristic $0$. Then there exists a semisimple trace-subalgebra $B$ such that
\[
A=B+J(A) = B_1 \oplus \cdots \oplus B_k + J(A)
\]
where $J = J(A)$ is the Jacobson radical of $A$ and $B_1$, \dots, $B_k$ are simple algebras.
\end{Theorem}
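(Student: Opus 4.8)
The plan is to reduce this trace version of Wedderburn--Malcev to the classical (ordinary) Wedderburn--Malcev theorem and then to promote the resulting semisimple complement to one that is stable under the trace. First I would invoke the ordinary Wedderburn--Malcev theorem: since $A$ is a finite dimensional algebra over an algebraically closed field of characteristic $0$, there is a semisimple subalgebra $B_0$ with $A = B_0 \oplus J(A)$ as vector spaces, where $J(A)$ is the Jacobson radical, and $B_0 = B_1 \oplus \cdots \oplus B_k$ decomposes into simple (matrix) algebras $B_i \cong M_{n_i}(F)$. The only thing that the trace structure adds is the requirement that the complement $B$ be a trace-subalgebra, i.e.\ closed under $\tr$; the decomposition into simple blocks is already furnished by the classical theorem, and each $B_i$ is automatically trace-simple by the first item of Remark~\ref{simple implies trace simple}.

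The key point, then, is to verify that a semisimple complement can be chosen stable under $\tr$. I would argue as follows. On each simple block $B_i \cong M_{n_i}(F)$ the restriction of $\tr$ is a trace function on a matrix algebra, so by Lemma~\ref{traces on matrices} it is a scalar multiple of the ordinary matrix trace $t_1$; in particular $\tr$ maps $B_0$ into $F \cdot 1 \subseteq B_0$. Thus $B_0$ is already closed under the trace, provided the unit $1$ of $A$ lies in $B_0$. Since $A$ is unitary and $J(A)$ is a nil (nilpotent) ideal, the identity element of $A$ is the identity of the semisimple part, hence $1 \in B_0$; this is where unitality of $A$ is used. Consequently $\tr(b) = \tr(b)\cdot 1 \in F\cdot 1 \subseteq B_0$ for every $b \in B_0$, and $B := B_0$ is the desired semisimple trace-subalgebra.

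I would then address the one subtlety that makes this more than a triviality: the classical Wedderburn--Malcev complement $B_0$ is not canonical, and a priori one must check that its simple summands $B_i$ can be realized inside $A$ so that the trace restricted to each really is of the form $\alpha_i t_1$ with no ``off-block'' interference. The cleanest way is to note that once $1 \in B_0$ and $B_0$ is a subalgebra, the formal trace $\tr$ restricts to a genuine trace function on the associative algebra $B_0$ in the sense of the definition in Section~2 (it is linear and satisfies $\tr(xy)=\tr(yx)$), and on a direct product $B_1 \oplus \cdots \oplus B_k$ of matrix algebras every such function is determined block-by-block, each block contributing a scalar multiple of $t_1$ by Lemma~\ref{traces on matrices}. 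This confirms $\tr(B_0)\subseteq F\cdot 1 \subseteq B_0$ and completes the argument.

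The main obstacle I anticipate is precisely the verification that the chosen semisimple complement is trace-stable rather than merely an algebra complement: the naive worry is that $\tr$ could send an element of $B_0$ to $\alpha\cdot 1$ with $\alpha\cdot 1$ having a nonzero component in $J(A)$ under the chosen splitting. This is ruled out exactly by the observation that $1\in B_0$, so that $F\cdot 1 \subseteq B_0$, which in turn rests on the unitality hypothesis and on $J(A)$ being nilpotent. Everything else is bookkeeping on top of the classical theorem together with the already-proved structure of traces on matrix algebras (Lemma~\ref{traces on matrices}).
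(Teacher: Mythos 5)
Your proof is correct and takes essentially the same route as the paper's: apply the ordinary Wedderburn--Malcev theorem to get $A=B_0\oplus J(A)$ with $B_0=M_{n_1}(F)\oplus\cdots\oplus M_{n_k}(F)$, observe that $1_A\in B_0$ so that $\tr(B_0)\subseteq F\cdot 1_A\subseteq B_0$, and identify the restriction of $\tr$ to each simple block as a scalar multiple of the usual matrix trace via Lemma~\ref{traces on matrices}. The extra care you take about ``off-block interference'' is harmless and resolved exactly as in the paper.
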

\begin{proof}
By the Wedderburn--Malcev theorem for the ordinary case (see for example \cite[Theorem $3.4.3$]{GiambrunoZaicev2005book}), we can write $A$ as a direct sum of vector spaces
\[
A = B + J = B_1 \oplus \cdots \oplus B_k + J
\]
where $B$ is a maximal semisimple subalgebra of $A$, $J= J(A)$ is the Jacobson radical of $A$, and $B_i$ are simple algebras, $i = 1$, \dots, $k$. By the Theorems of Wedderburn and Wedderburn--Artin on simple and semisimple algebras (see for instance \cite[Theorems 1.4.4, 2.1.6]{Herstein1968book}), and since $F$ is algebraically closed, we have that
\[
B = B_1 \oplus \cdots \oplus B_k = M_{n_1}(F) \oplus \cdots \oplus M_{n_k}(F).
\]
Here $M_{n_i}(F)$ is the simple algebra of $n_i \times n_i$ matrices, $i = 1$, \dots, $k$. Clearly $B$ is a trace-subalgebra since $1_A \in B$. Moreover, by considering the restriction of the trace $\tr$ on $B$ it is easy to see that there exist $\alpha_i \in F$ such that
\[
tr(a_1, \ldots, a_k) = \sum_{i = 1}^{k} t_{\alpha_i}(a_i)
\]
where $a_i \in M_{n_i}(F)$, $t_{\alpha_i} = \alpha_i t_1^i$, and $t_1^i$ is the ordinary trace on the matrix algebra $M_{n_i}(F)$. 
\end{proof}

In order to prove the main result of this paper we need the following lemmas.

\begin{Lemma} \label{C alfa in F+J}
Let $A = B + J$ be a finite dimensional algebra with trace $\tr$. If there exists $j \in J$ such that $\tr(j) \neq 0$ then $C_2^{t_{\alpha,1}} \in \V^{tr}(A)$, for some $\alpha \in F$.
\end{Lemma}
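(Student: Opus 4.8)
The plan is to exhibit a copy of $C_2^{t_{\alpha,1}}$ (with $\alpha=\tr(1_A)$) inside a trace-homomorphic image of a trace-subalgebra of $A$, so that membership in $\V^{tr}(A)$ follows from the closure of varieties under subalgebras and homomorphic images. Since $A$ is finite dimensional, $J$ is nilpotent, so the given $j\in J$ has $j^m=0$ for some minimal $m\ge 2$ (here $m\ge 2$ because $\tr(j)\neq 0$ forces $j\neq 0$). First I would pass to the commutative subalgebra $S=F\cdot 1_A+Fj+\cdots+Fj^{m-1}\cong F[t]/(t^m)$; as $1_A\in S$ we have $\tr(S)\subseteq F\cdot 1_A\subseteq S$, so $S$ is a trace-subalgebra of $A$.

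The crucial step is to produce a square-zero element of nonzero trace after a harmless quotient. Let $p$ be the largest index with $\tr(j^p)\neq 0$; such $p$ exists and $p\ge 1$ since $\tr(j)\neq 0$. Because $\tr(j^i)=0$ for all $i>p$, the ideal $I=Fj^{p+1}+\cdots+Fj^{m-1}$ of $S$ satisfies $\tr(I)=0$, and since $I\cap F\cdot 1_A=0$ this makes $I$ a trace-ideal. I would then pass to $\bar S=S/I$, an algebra in $\V^{tr}(A)$ whose induced trace $\overline{\tr}$ satisfies $\overline{\tr}(\bar s)=\tr(s)$. In $\bar S$ the image $\bar j$ has nilpotency index exactly $p+1$ (as $j^p\notin I$ while $j^{p+1}\in I$), so the element $u=\bar j^{\,p}$ is nonzero, satisfies $u^2=\bar j^{\,2p}=0$ (since $2p\ge p+1$), and has $\overline{\tr}(u)=\tr(j^p)=:\gamma\neq 0$.

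Finally, $T=F\cdot\bar 1+Fu$ is a two-dimensional trace-subalgebra of $\bar S$; sending $\bar 1\mapsto e_{11}+e_{22}$ and $u\mapsto e_{12}$ gives a trace-isomorphism $T\cong C_2^{t_{\alpha,\gamma}}$ with $\alpha=\tr(1_A)$, because $u^2=0$ matches $e_{12}^2=0$ and the two traces agree on the two basis elements. Since $\gamma\neq 0$, Remark~\ref{C alpha, beta equivalent to C alpha beta'} yields $C_2^{t_{\alpha,\gamma}}\cong C_2^{t_{\alpha,1}}$ as algebras with trace, whence $C_2^{t_{\alpha,1}}\in\V^{tr}(A)$.

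I expect the main obstacle to be the very existence of a square-zero element carrying nonzero trace: a naive choice such as $j$ itself, or an arbitrary power of $j$, need not be square-zero, and the trace may vanish on \emph{all} square-zero elements of $S$ (for instance when $\tr$ is nonzero on $J$ but vanishes on $J^2$). Choosing $p$ to be the top index with $\tr(j^p)\neq 0$ and then quotienting out the higher powers on which the trace vanishes is exactly what simultaneously forces the relevant power to become square-zero while keeping its trace nonzero; verifying that the discarded ideal is genuinely a trace-ideal is the only delicate bookkeeping.
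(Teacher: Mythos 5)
Your argument is correct and follows essentially the same route as the paper: both pass to the commutative trace-subalgebra generated by $1$ and $j$, quotient by the ideal spanned by the powers of $j$ on which the trace vanishes (your $p$ is the paper's $n-1$), and identify $F\bar 1 + F\bar j^{\,p}$ with $C_2^{t_{\alpha,\gamma}}\cong C_2^{t_{\alpha,1}}$ via Remark~\ref{C alpha, beta equivalent to C alpha beta'}. Your write-up is in fact slightly more explicit than the paper's on the two points that need checking, namely that $\tr$ vanishes on the discarded ideal and that $\bar j^{\,2p}=0$.
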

\begin{proof}
Let us consider the trace subalgebra $B'$ of $A$ generated by $1$, $j$ over $F$ and let $I$ be the ideal of $B'$ generated by $j^n$, where $n$ is the least integer such that $\tr(j^n) = \tr(j^{n+1}) = \cdots = 0$. Then the quotient algebra $\bar{B} = B'/I$ is an algebra with trace $t$ defined as $t(a+I) = \tr(a)$, for any $a \in B'$. Obviously $\bar{B} = \mbox{span} \{ \bar{1} = 1+I, \bar{j} = j+I, \ldots,  \bar{j}^{n-1} = j^{n-1}+I\}$. Let $\alpha = \tr(1)$ and $\beta = \tr({j}^{n-1}) \neq 0$.

We claim that $C_2^{t_{\alpha, \beta}} \in \V^{tr}(\bar{B})$. Let $\varphi\colon C_2^{t_{\alpha, \beta}} \to \bar{B} $ be the linear map defined by $\varphi(e_{11}+e_{22}) = \bar{1}$ and $\varphi(e_{12}) = \bar{j}^{n-1}$. It is easy to check that $\varphi$ is an injective homomorphism of algebras with trace. Hence $C_2^{t_{\alpha, \beta}}$ is isomorphic to a trace subalgebra of $\bar{B}$ and the claim is proved. 

Since, by Remark \ref{C alpha, beta equivalent to C alpha beta'}, $C_2^{t_{\alpha, \beta}} \cong C_2^{t_{\alpha, 1}}$, it follows that $C_2^{t_{\alpha,1}} \in \V^{tr}(A)$ and the proof is complete. 
\end{proof}

\begin{Lemma} \label{D2 alfa alfa in Mn alfa}
For any $\alpha \in F$, the algebra $D_2^{t_{\alpha, \alpha}}$ belongs to the variety generated by $M_n^{t_\alpha}$.
\end{Lemma}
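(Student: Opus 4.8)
The plan is to realize $D_2^{t_{\alpha,\alpha}}$ inside $M_n^{t_\alpha}$ via the natural map $\varphi\colon D_2 \to M_n$ sending $e_{11}\mapsto e_{11}$ and $e_{22}\mapsto e_{22}$ (matrix units in the upper-left corner of $M_n$), and to deduce $\Id^{tr}(M_n^{t_\alpha}) \subseteq \Id^{tr}(D_2^{t_{\alpha,\alpha}})$, which is precisely $D_2^{t_{\alpha,\alpha}} \in \V^{tr}(M_n^{t_\alpha})$. First I would record that $\varphi$ is an injective (not necessarily unital) algebra homomorphism preserving the trace: for $d = \mbox{diag}(a,b)$ one has $t_\alpha(\varphi(d)) = \alpha(a+b) = t_{\alpha,\alpha}(d)$, and, since $\varphi$ is multiplicative, $t_\alpha(\varphi(d_{i_1})\cdots\varphi(d_{i_r})) = t_{\alpha,\alpha}(d_{i_1}\cdots d_{i_r})$ for all products. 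Since $\mbox{char}\,F=0$, it suffices to check that each multilinear $f\in\Id^{tr}(M_n^{t_\alpha})$ lies in $\Id^{tr}(D_2^{t_{\alpha,\alpha}})$.

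When $n=2$ the map $\varphi$ is unital, $\varphi(e_{11}+e_{22}) = 1_{M_2}$, so $D_2^{t_{\alpha,\alpha}}$ is a genuine trace subalgebra of $M_2^{t_\alpha}$; then $\varphi(f(d_1,\ldots,d_m)) = f(\varphi(d_1),\ldots,\varphi(d_m)) = 0$ and injectivity of $\varphi$ gives $f(d_1,\ldots,d_m)=0$. This disposes of the base case directly.

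For $n\ge 3$ I would split $f = f_{\mathrm{pure}} + f_{\mathrm{mix}}$, where $f_{\mathrm{pure}}\in PT_m$ collects the terms in which every variable sits inside a trace and $f_{\mathrm{mix}}$ collects the terms having at least one free variable, and evaluate on $\varphi(d_1),\ldots,\varphi(d_m)$ in $M_n$. By trace preservation and multiplicativity the mixed part equals $\varphi(f_{\mathrm{mix}}(d_1,\ldots,d_m))$, which lies in $\spa\{e_{11},e_{22}\}$, while the pure part equals $s\cdot 1_{M_n}$ for the \emph{same} scalar $s\in F$ that $f_{\mathrm{pure}}$ produces when evaluated in $D_2^{t_{\alpha,\alpha}}$. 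Since $f(\varphi(d_1),\ldots,\varphi(d_m))=0$, reading off the $(3,3)$ entry forces $s=0$; then $\varphi(f_{\mathrm{mix}}(d_1,\ldots,d_m))=0$, and injectivity of $\varphi$ yields $f_{\mathrm{mix}}(d_1,\ldots,d_m)=0$ in $D_2$. Hence $f(d_1,\ldots,d_m) = f_{\mathrm{mix}}(d_1,\ldots,d_m) + s\cdot 1_{D_2} = 0$, so $f\in\Id^{tr}(D_2^{t_{\alpha,\alpha}})$.

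The main obstacle is exactly the discrepancy $\varphi(1_{D_2}) = e_{11}+e_{22}\ne 1_{M_n}$: because a pure trace factor is interpreted as a scalar multiple of the unit, $\varphi$ is not a morphism of trace algebras in the strict sense for $n\ge 3$, so one cannot merely invoke ``trace subalgebra implies inclusion of $T^{tr}$-ideals''. The surplus diagonal slots $e_{33},\ldots,e_{nn}$, on which $\varphi(D_2)$ vanishes, are precisely what lets me isolate the pure trace scalar from the mixed part and force it to vanish. The degenerate case $\alpha=0$ is consistent with this argument, since then every trace factor is $0$, the identity $f$ reduces to an ordinary identity of $M_n$, and $\varphi(D_2)$ is an ordinary subalgebra of $M_n$.
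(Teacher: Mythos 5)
Your argument is correct and is in essence the paper's own: the paper first notes that $D_2^{t_{\alpha,\alpha}}\subseteq M_2^{t_\alpha}$ is a unital trace subalgebra and then handles the non-unital corner embedding $M_2^{t_\alpha}\hookrightarrow M_n^{t_\alpha}$ by observing that a multilinear evaluation in $M_n$ differs from the corresponding one in $M_2$ only by terms supported on $e_{33},\ldots,e_{nn}$ coming from the pure trace part --- exactly the mechanism you exploit by reading off the $(3,3)$ entry to kill the scalar $s$. Collapsing the two steps into a single embedding $D_2\hookrightarrow M_n$ and arguing directly rather than by contradiction are only cosmetic differences.
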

\begin{proof}
Let us recall that we denote by $M_n^{t_\alpha}$ the algebra of the $n\times n$ matrices endowed with the trace $t_\alpha$; this is the usual trace multiplied by the scalar $\alpha\in F$. 
Since $D_2^{t_{\alpha,\alpha}} \subseteq M_2^{t_\alpha}$ as algebras with (the same) trace it follows that $D_2^{t_{\alpha,\alpha}}$ satisfies all trace identities of $M_2^{t_\alpha}$ (and some additional ones). Therefore $D_2^{t_{\alpha,\alpha}} \in \V^{tr}(M_2^{t_\alpha})$. In order to complete the proof we need just to show that $M_2^{t_{\alpha}} \in \V^{tr}(M_n^{t_\alpha})$. To this end, let $f \in \I^{tr}(M_n^{t_\alpha})$ be a multilinear trace identity of degree $m$ and suppose, by contradiction, that
there exists elementary matrices $e_{i_1 j_1}$, \dots, $e_{i_m j_m}$ in $M_2^{t_\alpha}$ such that $f(e_{i_1 j_1}, \ldots, e_{i_m j_m}) = \sum \alpha_{i, j} e_{i j} \neq 0$. Notice that, if we denote by $e_{ij}'$ the elementary matrices in $M_n^{t_\alpha}$, then $f(e_{i_1 j_1}', \ldots, e_{i_m j_m}') = \sum \alpha_{i, j} e_{i j} + \sum_{i=3}^n \beta_{ii} e_{ii} \neq 0$, a contradiction.
\end{proof}

In order to prove the main result of this paper we have to consider also the algebra $UT_2$ of $ 2 \times 2$ upper-triangular matrices endowed with zero trace. In the following theorem we collect some results concerning such an algebra. 

\begin{Theorem} \label{UT2}
Let $UT_2$ be the algebra of $ 2 \times 2$ upper-triangular matrices endowed with zero trace.
\begin{itemize}
\item[1.] The trace $T$-ideal $\I^{tr}(UT_2)$ is generated by $[x_1, x_2] [x_3, x_4]$ and $\Tr(x)$.
\vspace{0.1 cm}
\item[2.] $UT_2$ generates a variety of almost polynomial growth.
\vspace{0.1 cm}
\item[3.] $ \I^{tr}(UT_2) \nsubseteq \I^{tr}(A)$, where $A \in \{ D_2^{t_{\alpha, \beta}}, D_2^{t_{\gamma,\gamma}}, D_2^{t_{\delta,0}}, C_2^{t_{\epsilon,1}} \}$, $\alpha$, $\beta$, $\gamma$, $\delta \in F \setminus \{ 0 \}$, $\alpha \ne \beta$, $\epsilon \in F$.
\vspace{0.1 cm}
\item[4.] $ \I^{tr}(A) \nsubseteq \I^{tr}(UT_2)$, where $A \in \{ D_2^{t_{\alpha, \beta}}, D_2^{t_{\gamma,\gamma}}, D_2^{t_{\delta,0}}, C_2^{t_{\epsilon,1}} \}$, $\alpha$, $\beta$, $\gamma$, $\delta \in F \setminus \{ 0 \}$, $\alpha \ne \beta$, $\epsilon \in F$.
\vspace{0.1 cm}
\end{itemize}
\end{Theorem}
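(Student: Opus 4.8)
The plan is to reduce everything to the ordinary (trace-free) theory of $UT_2$, exploiting that the trace on $UT_2$ is identically zero. First observe that $\Tr(x)\equiv 0$ is a trace identity of $UT_2$, and substituting $x\mapsto M$ for an arbitrary monomial $M\in\mathcal{M}$ shows that $\langle \Tr(x)\rangle_{T^{tr}}$ already contains every $\Tr(M)$. Consequently $\Ft/\langle \Tr(x)\rangle_{T^{tr}}\cong \F$, and a multilinear trace polynomial lies in $\I^{tr}(UT_2)$ if and only if its trace-free part lies in the ordinary $T$-ideal $\I(UT_2)$. Invoking the classical description $\I(UT_2)=\langle [x_1,x_2][x_3,x_4]\rangle_{T}$ (see \cite[Chapter 7]{GiambrunoZaicev2005book}) then yields item 1, namely $\I^{tr}(UT_2)=\langle [x_1,x_2][x_3,x_4],\Tr(x)\rangle_{T^{tr}}$.

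For item 2 I would note that the same reduction gives a vector space isomorphism $MT_n/(MT_n\cap \I^{tr}(UT_2))\cong P_n/(P_n\cap \I(UT_2))$, so that $c_n^{tr}(UT_2)=c_n(UT_2)$; the latter grows exponentially (classically $\exp(UT_2)=2$). To obtain almost polynomial growth, take a proper subvariety $\V^{tr}(A)\subsetneq \V^{tr}(UT_2)$, i.e. $\I^{tr}(UT_2)\subsetneq \I^{tr}(A)$. Since $\Tr(x)\in\I^{tr}(UT_2)\subseteq\I^{tr}(A)$, the algebra $A$ also carries the zero trace, whence $c_n^{tr}(A)=c_n(A)$ by the same argument. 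Both trace $T$-ideals contain $\langle \Tr(x)\rangle_{T^{tr}}$ and are determined modulo it by their ordinary parts, so the strict inclusion must already be visible on the trace-free components, giving $\I(UT_2)\subsetneq\I(A)$. Thus $\V(A)$ is a proper subvariety of the ordinary variety generated by $UT_2$, and the classical almost polynomial growth of $UT_2$ (\cite[Theorem 7.2.7]{GiambrunoZaicev2005book}) forces $c_n(A)$, hence $c_n^{tr}(A)$, to be polynomially bounded.

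Items 3 and 4 are separation statements settled by single evaluations. For item 3, the polynomial $\Tr(x)\in\I^{tr}(UT_2)$ does not vanish on any of the listed algebras, since each has an element of nonzero trace: for instance $t_{\alpha,\beta}(e_{11})=\alpha\ne0$, $t_{\gamma,\gamma}(e_{11})=\gamma\ne0$, $t_{\delta,0}(e_{11})=\delta\ne0$, and $t_{\epsilon,1}(e_{12})=1\ne0$. For item 4, each listed algebra is commutative and hence satisfies $[x_1,x_2]\equiv 0$, whereas this fails on $UT_2$ because $[e_{11},e_{12}]=e_{12}\ne 0$; thus $[x_1,x_2]\in\I^{tr}(A)\setminus\I^{tr}(UT_2)$.

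The whole argument is routine once the reduction modulo $\Tr(x)$ is in place. The only point requiring genuine care is the transfer of almost polynomial growth in item 2: one must verify that a proper \emph{trace} subvariety forces a proper \emph{ordinary} subvariety, so that the classical theorem on $UT_2$ applies. Beyond correctly quoting the ordinary results for $UT_2$, no new obstacle arises.
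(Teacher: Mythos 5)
Your proposal is correct and follows essentially the same route as the paper: items 1 and 2 are obtained by reducing modulo $\Tr(x)\equiv 0$ to the classical theory of $UT_2$ (the paper simply cites the ordinary case, while you spell out the lattice correspondence between trace $T$-ideals containing $\Tr(x)$ and ordinary $T$-ideals), and items 3 and 4 use exactly the paper's separating observations, namely that $\Tr(x)$ fails on the listed algebras and that they are commutative while $UT_2$ is not.
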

\begin{proof}
The first two items follows directly from the ordinary case (see, for instance \cite[Chapter 4 and 7]{GiambrunoZaicev2005book}).

For the item (3) it is sufficient to observe that $\Tr(x) \equiv 0$ is a trace-identity of $UT_2$ but such a polynomial does not vanish on $A$, for any $A \in \{ D_2^{t_{\alpha, \beta}}, D_2^{t_{\gamma,\gamma}}, D_2^{t_{\delta,0}}, C_2^{t_{\epsilon,1}} \}$. Finally, since the algebras $D_2^{t_{\alpha, \beta}}, D_2^{t_{\gamma,\gamma}}, D_2^{t_{\delta,0}}, C_2^{t_{\epsilon,1}}$ are commutative and $UT_2$ is not, we get item (4), and the proof is complete. 
\end{proof}

Now we are in a position to prove the following theorem characterizing the varieties of unitary algebras with trace which are generated by finite dimensional algebras, and have  polynomial growth of their codimensions.

\begin{Theorem} \label{characterization}
Let $A$ be a finite dimensional unitary algebra with trace $\tr$ over a field $F$ of characteristic zero. Then the sequence $c_n^{tr}(A)$,  $n=1$, 2, \dots, is polynomially bounded if and only if $ D_2^{t_{\alpha, \beta}}$,  $D_2^{t_{\gamma,\gamma}}$, $D_2^{t_{\delta,0}}$, $C_2^{t_{\epsilon,1}}, UT_2 \notin \V^{tr}(A)$, for any choice of $\alpha$, $\beta$, $\gamma$, $\delta \in F \setminus \{ 0 \}$, $\alpha \ne \beta$, $\epsilon \in F$.
\end{Theorem}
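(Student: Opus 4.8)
The plan is to prove both directions of the equivalence, with the forward direction (polynomial growth $\Rightarrow$ the listed algebras are excluded) being the easy one and the reverse direction carrying the real content. For the forward direction, I would argue by contraposition: each of the algebras $D_2^{t_{\alpha,\beta}}$, $D_2^{t_{\gamma,\gamma}}$, $D_2^{t_{\delta,0}}$, $C_2^{t_{\epsilon,1}}$, $UT_2$ generates a variety of exponential growth by Theorems~\ref{identities and codimensions of D2 talpha beta}, \ref{identities of D2 talpha alpha}, \ref{identities and codimensions of D2 t alpha 0}, \ref{C alfa has exp growth}, and \ref{UT2}(2) respectively. If any of them lies in $\V^{tr}(A)$, then $\Id^{tr}(A)\subseteq\Id^{tr}(B)$ for that $B$, so $\V^{tr}(B)\subseteq\V^{tr}(A)$ and hence $c_n^{tr}(B)\le c_n^{tr}(A)$; since $c_n^{tr}(B)$ grows exponentially, so does $c_n^{tr}(A)$, contradicting polynomial boundedness.

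For the reverse direction I would assume all five algebras are excluded from $\V^{tr}(A)$ and deduce polynomial growth. The natural first step is to pass to an algebraically closed field extension (codimensions are unchanged under such extension) and apply the trace Wedderburn--Malcev decomposition of Theorem~\ref{WM}, writing $A=B+J(A)$ with $B=M_{n_1}(F)\oplus\cdots\oplus M_{n_k}(F)$ and the trace on $B$ given by scalars $\alpha_i$ on each block. I would then extract structural restrictions forced by the exclusions. First, $UT_2\notin\V^{tr}(A)$ together with the known (ordinary) theory forces the semisimple part to be ``small'': no block can be $M_{n_i}(F)$ with $n_i\ge 2$ carrying a nonzero trace, since $D_2^{t_{\alpha,\alpha}}\in\V^{tr}(M_{n_i}^{t_{\alpha_i}})$ by Lemma~\ref{D2 alfa alfa in Mn alfa} whenever $\alpha_i\ne0$, which would drag in an almost-polynomial-growth algebra. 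Likewise $C_2^{t_{\epsilon,1}}\notin\V^{tr}(A)$ combined with Lemma~\ref{C alfa in F+J} forces $\tr(j)=0$ for every $j\in J(A)$, i.e.\ the trace is supported entirely on the semisimple part. The upshot I would aim for is that the only surviving possibilities keep the blocks at $n_i=1$ (so $B\cong F^k$) with the trace controlled so that the diagonal algebras $D_2^{t_{\alpha,\beta}}$, $D_2^{t_{\gamma,\gamma}}$, $D_2^{t_{\delta,0}}$ also cannot be recovered.

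Once this structural normal form is in place, the final step is to bound $c_n^{tr}(A)$ polynomially. The exclusion of $UT_2$ controls the ordinary (traceless) part: by Theorem~\ref{UT2}(2) and the ordinary almost-polynomial-growth dichotomy, the product part of the identities is polynomially bounded. The exclusion of the three diagonal algebras together with the fact that the trace lives only on a commutative semisimple part $F^k$ where, by the analysis of Section~4, no ``large'' trace identity survives, confines the number of independent multilinear trace monomials to grow like a polynomial in $n$; concretely I expect a bound of the shape $c_n^{tr}(A)\le c\,n^{t}$ assembled by counting how many variables may sit inside traces, mirroring the counting in the proofs of Theorems~\ref{D2 alfa APG} and \ref{D2 alfa 0 APG}.

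The main obstacle I anticipate is the reverse direction's structural reduction: translating each of the five exclusions into a clean constraint on the Wedderburn--Malcev data $(n_1,\ldots,n_k,\alpha_1,\ldots,\alpha_k,J)$ and then verifying that these constraints are \emph{jointly} sufficient to force polynomial growth. Lemmas~\ref{C alfa in F+J} and \ref{D2 alfa alfa in Mn alfa} handle two of the five in a packaged way, but ruling out the diagonal algebras $D_2^{t_{\alpha,\beta}}$ and $D_2^{t_{\delta,0}}$ from the variety requires showing that when the trace restricted to the commutative semisimple summands is non-degenerate in the relevant sense one can \emph{construct} a copy of one of these algebras inside $\V^{tr}(A)$ — essentially the converse constructions to Lemmas~\ref{D2 delta 0 no T equ}--\ref{D2 alfa beta no T equ}. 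Pinning down exactly which trace configurations on $F^k$ do and do not produce these subalgebras, and checking no exponential contribution slips through, is where the care is needed.
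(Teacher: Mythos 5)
Your overall architecture is the same as the paper's: the forward direction via the exponential growth of the five listed algebras, and the reverse direction via scalar extension, the trace Wedderburn--Malcev decomposition of Theorem~\ref{WM}, Lemma~\ref{D2 alfa alfa in Mn alfa} to force all matrix blocks to be $1\times 1$, and Lemma~\ref{C alfa in F+J} to force the trace to vanish on $J$. That part is sound. But the two places you flag as ``where the care is needed'' are precisely the places where your proposal stops short of a proof, and they are genuine gaps. First, the reduction from $B\cong F^k$ to a terminal normal form: the ``converse constructions'' you anticipate as delicate are in fact immediate, because for any two blocks $A_i\oplus A_j\cong F_{\alpha_i}\oplus F_{\alpha_j}$ is a trace-subalgebra of $A$ isomorphic, as an algebra with trace, to $D_2^{t_{\alpha_i,\alpha_j}}$ (the paper writes out this isomorphism explicitly). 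Hence the three $D_2$ exclusions force a dichotomy you never state: either \emph{every} $\alpha_i=0$, in which case $UT_2\notin\V^{tr}(A)$ gives $A_iJA_j=0$ for $i\ne j$ and the ordinary theory yields $c_n^{tr}(A)=c_n(A)$ polynomially bounded; or some $\alpha_i\ne 0$, in which case $k=1$ and $A=F+J$ with $\tr(b+j)=\alpha b$, $\alpha\ne 0$. Your proposal does not separate these cases, and the all-zero-trace case cannot be absorbed into the counting you sketch.

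Second, and more seriously, the polynomial bound for $A=F+J$ with $\tr(b+j)=\alpha b$, $\alpha\ne0$, does not follow from ``mirroring the counting in Theorems~\ref{D2 alfa APG} and \ref{D2 alfa 0 APG}'': those arguments bound proper subvarieties of specific $D_2$-varieties using a polynomial $f$ that fails on $D_2$, and no such $f$ is available here. The paper instead exhibits two concrete trace identities of this algebra, namely $\alpha\Tr(x_1x_2)-\Tr(x_1)\Tr(x_2)\equiv 0$ and $\bigl(\Tr(x_1)-\alpha x_1\bigr)\cdots\bigl(\Tr(x_{q+1})-\alpha x_{q+1}\bigr)\equiv 0$ where $J^{q+1}=0$. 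The first reduces every trace factor to a product of traces of single variables; the second bounds the number of such factors by $q$; and a Poincar\'e--Birkhoff--Witt reduction together with $J^{q+1}=0$ (which kills products of more than $q$ commutators) bounds the non-trace part. Only the combination gives $c_n^{tr}(A)=O(n^{2q})$. Without producing these identities, nothing in your outline prevents the trace monomials $\Tr(x_{i_1}\cdots x_{i_k})\cdots$ from contributing exponentially many independent elements, so the concluding bound is asserted rather than proved.
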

\begin{proof}
By Theorems \ref{identities and codimensions of D2 t alpha 0}, \ref{identities of D2 talpha alpha}, \ref{identities and codimensions of D2 talpha beta}, \ref{C alfa has exp growth}, \ref{UT2}, the algebras $D_2^{t_{\alpha, \beta}}$, $D_2^{t_{\gamma,\gamma}}$, $D_2^{t_{\delta,0}}$, $C_2^{t_{\epsilon,1}}$ and $UT_2$ generate varieties of exponential growth. Hence, if $c_n^{tr}(A)$ is polynomially bounded, then $  D_2^{t_{\alpha, \beta}}$, $D_2^{t_{\gamma,\gamma}}$, $  D_2^{t_{\delta,0}}$, $C_2^{t_{\epsilon,1}}, UT_2 \notin \V^{tr}(A)$, for any $\alpha$, $\beta$, $\gamma$, $\delta \in F \setminus \{ 0 \}$, $\alpha \ne\beta$, $\epsilon \in F$.

Conversely suppose that $ D_2^{t_{\alpha, \beta}}$, $D_2^{t_{\gamma,\gamma}}$, $ D_2^{t_{\delta, 0}}$, $C_2^{t_{\epsilon,1}}, UT_2 \notin \V^{tr}(A)$, for any $\alpha$, $\beta$, $\gamma$, $ \delta \in F \setminus \{ 0 \}$, $\alpha \ne\beta$, $\epsilon \in F$. Since we are dealing with codimensions, and these do not change under extensions of the base field, we may assume that the field $F$ is algebraically closed. By Theorem \ref{WM}, we get that 
\[
A = M_{n_1}(F) \oplus \cdots \oplus M_{n_k}(F) + J, \ \ k \geq 1,
\]
and there exist constants $\alpha_i$ such that, for $a_i \in M_{n_i}(F)$, we have
\[
tr(a_1, \ldots, a_k) = \sum_{i = 1}^{k} t_{\alpha_i}(a_i).
\]
Since $D_2^{t_{\gamma,\gamma}} \notin \V^{tr}(A)$, for any $\gamma \in F \setminus \{ 0 \}$, and since, by Lemma \ref{D2 alfa alfa in Mn alfa}, we have that, for $n\ge 2$, $D_2^{t_{\gamma,\gamma}} \in \V^{tr}(M^{t_\gamma}_n) \subseteq \V^{tr}(A) $, we get that $n_i = 1$, for every $i = 1$, \dots, $k$. Hence 
\[
A=A_1\oplus \cdots\oplus A_k + J
\]
where for every $i=1$, \dots, $k$, $A_i\cong F$ and the trace on it is $t_{\alpha_i}$.

Since, for any $\alpha \in F$, $C_2^{t_{\alpha,1}} \not \in \V^{tr}(A)$, by Lemma \ref{C alfa in F+J} we must have that the trace vanishes on $J$.

Now, if for any $i = 1$, \dots, $k$, the trace on $A_i$ is zero, since $UT_2 \not \in \V^{tr}(A)$, then, for any $i \neq j$, we must have $A_i J A_j = 0$. Hence, for $n \geq 1$, $c_n^{tr}(A) = c_n(A)$ is polynomially bounded (see, for instance \cite[Chapter 7]{GiambrunoZaicev2005book}) and we are done in this case.

Hence, we may assume that there exists $i$ such that the trace on $A_i$ is $t_{\alpha_i}$, with $\alpha_i \neq 0$.

Let $F_\alpha$ denote the field $F$ endowed with the trace $t_\alpha$. We claim that $F_\alpha \oplus F_\beta$ is isomorphic to $D_2^{t_{\alpha, \beta}}$ if $\alpha \neq \beta$ (notice that $\beta$ could be zero) and to $D_2^{t_{\alpha, \alpha}}$ otherwise. Here we shall denote by $t$ the trace map on $F_{\alpha} \oplus F_{\beta} $ defined as $t((a,b)) = t_{\alpha}(a) + t_{\beta}(b)$, for all $(a,b) \in F_\alpha \oplus F_\beta$. In order to prove the claim, let us consider the linear map $\varphi\colon D_2 \rightarrow F_\alpha \oplus F_\beta $ such that
\[
\varphi   \begin{pmatrix} 1 & 0 \\ 0  &
0 \end{pmatrix}  = (1,0) \ \ \ \ \ \ \ \ \ \mbox{and} \ \ \ \ \ \ \ \ \ \varphi   \begin{pmatrix} 0 & 0 \\ 0  &
1 \end{pmatrix}  = (0,1).
\]
It is easily seen that $\varphi$ is an isomorphism of algebras. 

Now, if $\alpha \neq \beta$, we have that
\[
\varphi \left ( t_{\alpha, \beta}   \begin{pmatrix} 1 & 0 \\ 0  &
0 \end{pmatrix} \right ) = \varphi(\alpha)  = (\alpha, \alpha) = t(1,0) = t \left ( \varphi   \begin{pmatrix} 1 & 0 \\ 0  &
0 \end{pmatrix}  \right ),
\]
\[
\varphi \left ( t_{\alpha, \beta}   \begin{pmatrix} 0 & 0 \\ 0  &
1 \end{pmatrix}  \right ) = \varphi(\beta)  = (\beta, \beta) = t(0,1) = t \left ( \varphi   \begin{pmatrix} 0 & 0 \\ 0  &
1 \end{pmatrix}  \right ),
\]
and so $\varphi$ is an isomorphism of algebras with trace between $D_2^{t_{\alpha, \beta}}$ and $F_\alpha \oplus F_\beta$. In the same way, if $\alpha = \beta$, we get a trace isomorphism between $D_2^{t_{\alpha,\alpha}}$ and $F_\alpha \oplus F_\alpha$. 

Hence, since $D_2^{t_{\alpha, \beta}}, D_2^{t_{\gamma,\gamma}}$, $ D_2^{t_{\delta,0}} \notin \V^{tr}(A)$, it follows that 
\[
A = B + J
\]
where $B \cong F$ and for all $a = b+j \in A$, $tr(a) = tr(b+j) = \alpha b$, with $\alpha \neq 0$. 

In order to complete the proof we need to show that $B + J$ has polynomially bounded trace codimensions. 

Notice that the following polynomials are trace identities of $B + J$:
\begin{enumerate}
\item[1.] $\alpha \Tr(x_1 x_2) - \Tr(x_1) \Tr(x_2) \equiv 0$,
\vspace{0.2 cm}
\item[2.] $ 	\left (  \Tr(x_1) - \alpha x_1 \right )\cdots  \left ( \Tr(x_{q+1}) - \alpha x_{q+1} \right )\equiv 0$, where $J^q \neq 0$ and $J^{q+1} = 0$.
\end{enumerate}
Modulo the first trace identity, every multilinear trace polynomial in the variables $x_1$, \dots, $x_n$ is a linear combination of expressions of the type
\[
\Tr(x_{i_1}) \cdots \Tr(x_{i_a}) x_{j_1} \cdots x_{j_b}
\]
where $	\left \{ i_1, \ldots, i_a, j_1, \ldots, j_b \right \} = \left \{ 1, \ldots, n \right \} $ and $i_1 < \cdots < i_a $. The second identity implies that we can suppose $a\le q$. Indeed if $a\ge q+1$ then one can represent the product of $q+1$ traces as a linear combination of elements with fewer traces by expanding the second trace identity. Therefore we may suppose $a\le q$. 
 
We can further reduce the form of the latter polynomials. As we consider algebras with unit we can rewrite each monomial $x_{j_1} \cdots x_{j_b}$ as a linear combination of elements of the type
\[
x_{p_1} \cdots x_{p_c} \left [ x_{l_1}, \ldots , x_{l_{k}} \right ] \cdots \left [ x_{m_1}, \ldots , x_{m_{h}} \right ].
\]
Here the commutators that appear are left normed, that is $[u,v] = uv-vu$, $[u,v,w] = [[u,v],w]$ and so on. Moreover 
\[
\left \{ i_1, \ldots, i_a, p_1, \ldots, p_c, l_1, \ldots, l_k, \ldots, m_1, \ldots, m_h \right \} = \left \{ 1, \ldots, n \right \}.
\]
By applying the Poincar\'e--Birkhoff--Witt theorem, we can assume further that $p_1 < \cdots < p_c $, and that the commutators are ordered. Therefore we can write each trace polynomial as a linear combination of elements of the type
\[
\Tr(x_{i_1}) \cdots \Tr(x_{i_a}) x_{p_1} \cdots x_{p_c}  [ x_{l_1}, \ldots , x_{l_{k_1}}  ] \cdots  [ x_{m_1}, \ldots , x_{m_{k_h}}  ],
\]
where $	\left \{ i_1, \ldots, i_a, p_1, \ldots, p_c, l_1, \ldots, l_{k_1}, \ldots, m_1, \ldots, m_{k_h} \right \} = \left \{ 1, \ldots, n \right \} $ and $i_1 < \cdots < i_a $, $p_1 < \cdots < p_c $, $a \leq q$. The algebra $B$ in the decomposition $A=B+J$ is commutative. Hence each commutator vanishes when evaluated on elements of $B$ only. Thus in order to get a non-zero element we must substitute in a commutator at least one element from $J$. Since $J^{q+1} = 0$, a product of $q+1$ commutators is a trace identity. Therefore we also get the restriction
$ K  := k_1 + \cdots + k_h \leq q$. In this way 
\begin{eqnarray*}
c_n^{tr}(A) &\le & \sum_{a=0}^q \binom{n}{a}  	\left ( \sum_{K = k_1 + \cdots + k_h = 0}^q 		\binom{n-a}{K}  \binom{K}{k_1, \ldots, k_h } k_1! \cdots k_h!   \right ) \\
&=&\sum_{a=0}^q  \dfrac{n(n-1) \cdots (n-a+1)}{a!}\left ( \sum_{K = k_1 + \cdots + k_h=0}^q 	\dfrac{(n-a)!}{(n-a-K)!}   \right ) \\
& \approx & cn^{2q}
\end{eqnarray*}
where $c$ is a constant. Hence $A = B + J$ has polynomial growth and the proof is complete.
\end{proof}

As an immediate consequence, we get the following result.

\begin{Corollary} If $A$ is a finite dimensional unitary algebra with trace, then the sequence  $c^{tr}_n(A)$, $n=1$, 2, \dots, is either  polynomially bounded or grows exponentially.
\end{Corollary}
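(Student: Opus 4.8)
The plan is to read off the dichotomy directly from Theorem \ref{characterization}, which already does all the heavy lifting. That theorem is a genuine \emph{if and only if}: the sequence $c_n^{tr}(A)$ is polynomially bounded precisely when none of the five algebras $D_2^{t_{\alpha, \beta}}$, $D_2^{t_{\gamma,\gamma}}$, $D_2^{t_{\delta,0}}$, $C_2^{t_{\epsilon,1}}$, $UT_2$ lies in $\V^{tr}(A)$. So the logical skeleton is a clean case split, and the only thing that remains to be excluded is intermediate (subexponential but superpolynomial) growth in the complementary case.

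First I would apply Theorem \ref{characterization} and distinguish two cases. If none of the listed algebras belongs to $\V^{tr}(A)$, the theorem gives at once that $c_n^{tr}(A)$ is polynomially bounded, and there is nothing further to prove. In the remaining case at least one of the five algebras, call it $B$, satisfies $B \in \V^{tr}(A)$, equivalently $\Id^{tr}(A) \subseteq \Id^{tr}(B)$. The step I would carry out here is the monotonicity of trace codimensions under containment of $T^{tr}$-ideals: from $\Id^{tr}(A) \subseteq \Id^{tr}(B)$ one gets $MT_n \cap \Id^{tr}(A) \subseteq MT_n \cap \Id^{tr}(B)$, and passing to the quotients yields $c_n^{tr}(B) \leq c_n^{tr}(A)$ for every $n$.

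It then suffices to recall that each of the five algebras generates a variety of exponential growth, by Theorems \ref{identities and codimensions of D2 t alpha 0}, \ref{identities of D2 talpha alpha}, \ref{identities and codimensions of D2 talpha beta}, \ref{C alfa has exp growth} and \ref{UT2}; indeed $c_n^{tr}(B) = 2^n$ or $2^{n+1}-n-1$ in the four commutative cases and $c_n^{tr}(B)$ grows exponentially for $UT_2$. Hence $c_n^{tr}(A) \geq c_n^{tr}(B)$ forces $c_n^{tr}(A)$ itself to grow exponentially. Combining the two cases shows that $c_n^{tr}(A)$ is either polynomially bounded or exponentially growing, with no intermediate behaviour possible.

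I do not expect a genuine obstacle here, since the corollary is immediate once Theorem \ref{characterization} is in hand; the whole difficulty sits in that theorem and the supporting codimension computations. The only point demanding a little care is precisely the monotonicity argument above — making sure that membership $B \in \V^{tr}(A)$ translates into the inequality $c_n^{tr}(B) \leq c_n^{tr}(A)$ and that an exponential lower bound therefore propagates upward to $A$.
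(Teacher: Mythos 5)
Your argument is correct and is exactly the reasoning the paper intends when it states the corollary as an ``immediate consequence'' of Theorem \ref{characterization}: either no excluded algebra lies in $\V^{tr}(A)$ and the theorem gives polynomial boundedness, or some $B$ among the five does, and the monotonicity $c_n^{tr}(B)\le c_n^{tr}(A)$ together with the exponential growth of each $B$ forces exponential growth of $c_n^{tr}(A)$. No gaps; this matches the paper's approach.
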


Now we prove the following corollary.

\begin{Corollary} Let $\alpha$, $\beta \in F \setminus \{ 0 \}$, $\alpha \neq \beta$. Any proper subvariety of $\V^{tr}(D_2^{t_{\alpha, \beta}})$, generated by a finite dimensional algebra with trace, has polynomial growth.
\end{Corollary}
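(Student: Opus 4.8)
The plan is to reduce the entire statement to the characterization of polynomial growth in Theorem~\ref{characterization}. Let $A$ be a finite dimensional unitary algebra with trace such that $\V^{tr}(A)\subsetneq\V^{tr}(D_2^{t_{\alpha,\beta}})$. By Theorem~\ref{characterization}, it suffices to prove that none of the test algebras $D_2^{t_{\alpha',\beta'}}$, $D_2^{t_{\gamma,\gamma}}$, $D_2^{t_{\delta,0}}$, $C_2^{t_{\epsilon,1}}$, $UT_2$ (for $\alpha',\beta',\gamma,\delta\in F\setminus\{0\}$, $\alpha'\neq\beta'$, $\epsilon\in F$) lies in $\V^{tr}(A)$; once this is established, $c_n^{tr}(A)$ is polynomially bounded and so $\V^{tr}(A)$ has polynomial growth. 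The guiding observation is that $\V^{tr}(A)\subseteq\V^{tr}(D_2^{t_{\alpha,\beta}})$, so any test algebra lying in $\V^{tr}(A)$ would a fortiori lie in the larger variety $\V^{tr}(D_2^{t_{\alpha,\beta}})$; hence it is enough to exclude the test algebras from the ambient variety, with the sole exception of the one that genuinely belongs to it.

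First I would dispose of every test algebra other than $D_2^{t_{\alpha,\beta}}$ itself, using that $B\in\V^{tr}(D_2^{t_{\alpha,\beta}})$ is equivalent to $\Id^{tr}(D_2^{t_{\alpha,\beta}})\subseteq\Id^{tr}(B)$. For $D_2^{t_{\alpha',\beta'}}$ with $\{\alpha',\beta'\}\neq\{\alpha,\beta\}$ this inclusion fails by Lemma~\ref{D2 alfa beta no T equ}(1); for $D_2^{t_{\gamma,\gamma}}$ it fails by Lemma~\ref{D2 alfa beta no T equ}(2); for $D_2^{t_{\delta,0}}$ by Lemma~\ref{D2 alfa beta no T equ}(3); and for $C_2^{t_{\epsilon,1}}$ by Lemma~\ref{D2 alpha 0, D2 alfa alfa no T equ C beta}(3). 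Finally, $UT_2$ is excluded because $[x_1,x_2]\equiv 0$ is a trace identity of $D_2^{t_{\alpha,\beta}}$, hence of every algebra in $\V^{tr}(D_2^{t_{\alpha,\beta}})$, while $UT_2$ is noncommutative. Consequently none of these algebras lies in $\V^{tr}(D_2^{t_{\alpha,\beta}})$, so none lies in the smaller variety $\V^{tr}(A)$.

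The remaining, and only delicate, point is to rule out $D_2^{t_{\alpha,\beta}}$ itself (equivalently its trace-isomorphic copy $D_2^{t_{\beta,\alpha}}$, cf.\ Remark~\ref{isomorphic Dn}), which \emph{does} belong to $\V^{tr}(D_2^{t_{\alpha,\beta}})$. This is exactly where the properness of the inclusion enters. If $D_2^{t_{\alpha,\beta}}\in\V^{tr}(A)$ then $\Id^{tr}(A)\subseteq\Id^{tr}(D_2^{t_{\alpha,\beta}})$; combined with the inclusion $\Id^{tr}(D_2^{t_{\alpha,\beta}})\subseteq\Id^{tr}(A)$ coming from $\V^{tr}(A)\subseteq\V^{tr}(D_2^{t_{\alpha,\beta}})$, this forces $\Id^{tr}(A)=\Id^{tr}(D_2^{t_{\alpha,\beta}})$, that is $\V^{tr}(A)=\V^{tr}(D_2^{t_{\alpha,\beta}})$, contradicting properness. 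Hence $D_2^{t_{\alpha,\beta}}\notin\V^{tr}(A)$. With all test algebras excluded, Theorem~\ref{characterization} yields that $c_n^{tr}(A)$ is polynomially bounded, completing the proof. The hard part will be precisely this last step: correctly invoking the strictness hypothesis to eliminate the single test algebra that survives in the ambient variety. All the other exclusions are immediate consequences of the comparison lemmas of Sections~4 and~5 together with the commutativity of $D_2^{t_{\alpha,\beta}}$.
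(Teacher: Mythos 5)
Your proof is correct and follows essentially the same route as the paper: exclude all the test algebras of Theorem~\ref{characterization} from the ambient variety $\V^{tr}(D_2^{t_{\alpha,\beta}})$ via Lemmas~\ref{D2 alfa beta no T equ} and~\ref{D2 alpha 0, D2 alfa alfa no T equ C beta} and Theorem~\ref{UT2}, then invoke the characterization. You are in fact more explicit than the paper on the one delicate point --- using the strictness of the inclusion to rule out $D_2^{t_{\alpha,\beta}}$ itself (and its trace-isomorphic copy $D_2^{t_{\beta,\alpha}}$), which the comparison lemmas cannot exclude --- a step the paper's proof leaves implicit.
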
 
\begin{proof}
Let $\mathcal{V} = \V^{tr}(A) \subsetneq \V^{tr}(D_2^{t_{\alpha, \beta}})$  where $A$ is a finite dimensional algebra with trace. As a consequence of Lemmas \ref{D2 alfa beta no T equ}, \ref{D2 alpha 0, D2 alfa alfa no T equ C beta} (item $3.$) and Theorem \ref{UT2}, we get that $UT_2$, $D_2^{t_{\alpha', \beta'}}$, $D_2^{t_{\gamma, \gamma}}$, $D_2^{t_{\delta,0}}$, $ C_2^{t_{\epsilon,1}} \not \in \V^{tr}(A)$, for any $\alpha'$, $\beta'$, $\gamma$, $\delta \in F \setminus \{ 0 \}$, $\alpha' \ne\beta'$, $\epsilon \in F$. Hence Theorem \ref{characterization} applies and the proof is complete.
\end{proof}

With a similar approach we obtain the following result. 

\begin{Corollary} For any $\epsilon \in F$, any proper subvariety of $\V^{tr}(C_2^{t_{\epsilon,1}})$, generated by a finite dimensional algebra with trace, has polynomial growth.
\end{Corollary}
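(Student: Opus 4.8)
The plan is to reduce this corollary to the main characterization theorem, Theorem \ref{characterization}, exactly as in the preceding corollary for $\V^{tr}(D_2^{t_{\alpha,\beta}})$. Suppose $\mathcal{V} = \V^{tr}(A) \subsetneq \V^{tr}(C_2^{t_{\epsilon,1}})$, where $A$ is a finite dimensional algebra with trace. By Theorem \ref{characterization}, it suffices to show that none of the ``forbidden'' algebras $UT_2$, $D_2^{t_{\alpha',\beta'}}$, $D_2^{t_{\gamma,\gamma}}$, $D_2^{t_{\delta,0}}$, $C_2^{t_{\epsilon',1}}$ (for $\alpha',\beta',\gamma,\delta \in F\setminus\{0\}$ with $\alpha'\neq\beta'$, and $\epsilon'\in F$) lies in $\V^{tr}(A)$. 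Since $\V^{tr}(A)\subseteq \V^{tr}(C_2^{t_{\epsilon,1}})$, every such algebra lying in $\V^{tr}(A)$ would automatically lie in $\V^{tr}(C_2^{t_{\epsilon,1}})$, so it is enough to check that each forbidden algebra fails to belong to $\V^{tr}(C_2^{t_{\epsilon,1}})$ itself, equivalently that $\Id^{tr}(C_2^{t_{\epsilon,1}}) \not\subseteq \Id^{tr}(B)$ for each forbidden $B$.

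First I would dispose of the three diagonal algebras and $UT_2$ using the lemmas already established. By Lemma \ref{C alfa no equ D2 beta gamma} we have $\Id^{tr}(C_2^{t_{\gamma,1}}) \not\subset \Id^{tr}(D_2^{t_{\alpha,\beta}})$ for any $\gamma\in F$, $\alpha\neq 0$, and the final paragraph of that lemma records the specializations $\beta=\alpha$ and $\beta=0$, covering $D_2^{t_{\alpha,\alpha}}$ and $D_2^{t_{\alpha,0}}$ as well. Taking $\gamma=\epsilon$ gives $D_2^{t_{\alpha',\beta'}}, D_2^{t_{\gamma,\gamma}}, D_2^{t_{\delta,0}} \notin \V^{tr}(C_2^{t_{\epsilon,1}})$. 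For $UT_2$, I would invoke Theorem \ref{UT2}: since $UT_2$ is noncommutative while $C_2^{t_{\epsilon,1}}$ is commutative, $C_2$ satisfies $[x_1,x_2]\equiv 0$, which is not an identity of $UT_2$, so $\Id^{tr}(C_2^{t_{\epsilon,1}})\not\subseteq\Id^{tr}(UT_2)$, whence $UT_2\notin\V^{tr}(C_2^{t_{\epsilon,1}})$.

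The remaining point, and the place where the argument needs genuine input, is to exclude $C_2^{t_{\epsilon',1}}$ for $\epsilon'\neq\epsilon$ from $\V^{tr}(A)$. Here I would use Lemma \ref{C alfa no T equ C beta}, which states precisely that $\Id^{tr}(C_2^{t_{\alpha,1}})\not\subset\Id^{tr}(C_2^{t_{\beta,1}})$ for distinct $\alpha,\beta$; with $\alpha=\epsilon$ and $\beta=\epsilon'$ this shows $C_2^{t_{\epsilon',1}}\notin\V^{tr}(C_2^{t_{\epsilon,1}})$ whenever $\epsilon'\neq\epsilon$. The subtlety I expect is that this handles $\epsilon'\neq\epsilon$ but not $\epsilon'=\epsilon$; the latter would assert $C_2^{t_{\epsilon,1}}\in\V^{tr}(A)$ for the proper subvariety $\V^{tr}(A)$, which is impossible since $\V^{tr}(C_2^{t_{\epsilon,1}})$ is the whole variety and $A$ generates a proper subvariety, so $\Id^{tr}(A)\supsetneq\Id^{tr}(C_2^{t_{\epsilon,1}})$ and thus $C_2^{t_{\epsilon,1}}\notin\V^{tr}(A)$. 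Combining all cases, every forbidden algebra is excluded from $\V^{tr}(A)$, so Theorem \ref{characterization} applies and $c_n^{tr}(A)$ is polynomially bounded.

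The main obstacle is really bookkeeping rather than mathematics: one must make sure the parameter ranges in Theorem \ref{characterization} are matched correctly and that the $\epsilon'=\epsilon$ boundary case is argued separately from the $\epsilon'\neq\epsilon$ case, since the lemmas comparing the $C_2$'s only yield strict non-inclusion for distinct parameters. Once that distinction is handled by appealing to the properness of the subvariety, the rest is a direct citation of the already-proven non-inclusion lemmas.
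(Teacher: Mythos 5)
Your proposal is correct and follows essentially the same route as the paper, which proves this corollary ``with a similar approach'' to the preceding one: reduce to Theorem \ref{characterization} by excluding each forbidden algebra from $\V^{tr}(A)\subseteq\V^{tr}(C_2^{t_{\epsilon,1}})$ via the non-inclusion results (Lemma \ref{C alfa no equ D2 beta gamma} for the three diagonal algebras, Lemma \ref{C alfa no T equ C beta} for $C_2^{t_{\epsilon',1}}$ with $\epsilon'\neq\epsilon$, and Theorem \ref{UT2} for $UT_2$). Your explicit handling of the boundary case $\epsilon'=\epsilon$ by the properness of the subvariety is exactly the step the paper leaves implicit, and it is argued correctly.
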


According to the previous results, with an abuse of terminology, we may say that $D_2^{t_{\alpha, \beta}}$ and $C_2^{t_{\epsilon,1}}$ generate varieties of almost polynomial growth. As a consequence we state the following corollary.

\begin{Corollary} The algebras $UT_2$, $D_2^{t_{\alpha, \beta}}$, $D_2^{t_{\gamma,\gamma}}$, $D_2^{t_{\delta,0}}$ and $C_2^{t_{\epsilon,1}}$, $\alpha, \beta, \gamma, \delta \in F \setminus \{ 0 \}$, $\alpha \neq \beta$, $\epsilon \in F$, are the only finite dimensional algebras with trace generating varieties of almost polynomial growth.
\end{Corollary}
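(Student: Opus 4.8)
The plan is to deduce the result almost formally from the characterization of polynomial growth in Theorem~\ref{characterization}, once it is combined with the almost polynomial growth statements already established for the five listed families. Two things must be checked: that each listed algebra does generate a variety of almost polynomial growth, and that, conversely, no other finite dimensional trace algebra does.

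First I would record that each of $UT_2$, $D_2^{t_{\alpha,\beta}}$, $D_2^{t_{\gamma,\gamma}}$, $D_2^{t_{\delta,0}}$ and $C_2^{t_{\epsilon,1}}$ generates a variety of almost polynomial growth. For $UT_2$, $D_2^{t_{\gamma,\gamma}}$ and $D_2^{t_{\delta,0}}$ this is exactly Theorem~\ref{UT2}(2) together with Theorems~\ref{D2 alfa APG} and \ref{D2 alfa 0 APG}; for $D_2^{t_{\alpha,\beta}}$ and $C_2^{t_{\epsilon,1}}$ it is the content, under the abuse of terminology fixed above, of the two preceding corollaries, which guarantee that every proper subvariety generated by a finite dimensional algebra has polynomial growth. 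I would also recall from Theorems~\ref{identities and codimensions of D2 t alpha 0}, \ref{identities of D2 talpha alpha}, \ref{identities and codimensions of D2 talpha beta}, \ref{C alfa has exp growth} and \ref{UT2} that all five families have exponential growth; this is the ingredient needed for the converse.

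For the converse, let $A$ be a finite dimensional unitary trace algebra such that $\V^{tr}(A)$ has almost polynomial growth; in particular $c_n^{tr}(A)$ grows exponentially and is therefore not polynomially bounded. By the contrapositive of Theorem~\ref{characterization}, at least one algebra $B$ from the list lies in $\V^{tr}(A)$, so $\V^{tr}(B)\subseteq \V^{tr}(A)$. Were this inclusion proper, then, since $B$ is finite dimensional, the almost polynomial growth of $\V^{tr}(A)$ would force $\V^{tr}(B)$ to have polynomial growth, contradicting the exponential growth of $B$ recorded above. Hence $\V^{tr}(A)=\V^{tr}(B)$, that is $A\sim_{T^{tr}}B$, and $A$ is $T^{tr}$-equivalent to one of the five listed algebras.

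Finally, to see that the list is irredundant I would invoke the non-equivalence results: Lemmas~\ref{D2 delta 0 no T equ}, \ref{D2 gamma gamma no T equ}, \ref{D2 alfa beta no T equ}, \ref{C alfa no T equ C beta}, \ref{D2 alpha 0, D2 alfa alfa no T equ C beta} and \ref{C alfa no equ D2 beta gamma}, together with items (3) and (4) of Theorem~\ref{UT2}, show that the five families are pairwise $T^{tr}$-inequivalent, so none may be dropped. The only delicate point, which I would flag explicitly, is the meaning of ``almost polynomial growth'' for $D_2^{t_{\alpha,\beta}}$ and $C_2^{t_{\epsilon,1}}$: these generate almost polynomial growth only in the weaker sense that proper subvarieties generated by finite dimensional algebras have polynomial growth. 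The argument stays consistent precisely because $A$ and every $B$ in the list are themselves finite dimensional, so each subvariety intervening in the proof is generated by a finite dimensional algebra; keeping this restriction in force throughout is the main thing to be careful about.
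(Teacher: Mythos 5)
Your proposal is correct and follows exactly the route the paper intends: the paper states this corollary without writing out a proof, presenting it as an immediate consequence of Theorem~\ref{characterization} combined with the exponential-growth and almost-polynomial-growth results for the five families, which is precisely the argument you make explicit (including the correct handling of the weaker ``finite dimensional'' sense of almost polynomial growth for $D_2^{t_{\alpha,\beta}}$ and $C_2^{t_{\epsilon,1}}$). Nothing to add.
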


\textbf{Acknowledgements}

We thank the Referee whose comments were appreciated. Remark~\ref{differentvarieties} was added in order to answer a question raised by the Referee.

\end{document}